\RequirePackage{amsmath} 
\documentclass[smallcondensed,numbook]{svjour3}

\usepackage{amssymb}
\usepackage{hyperref}
\usepackage{graphicx}
\usepackage{bbm}
\usepackage{mathtools}
\usepackage{mleftright}
\usepackage{physics}
\usepackage{siunitx}
\usepackage{xcolor}
\usepackage{xparse}
\usepackage{pgfplots}
\usepackage{tikz-cd}
\usepackage{csl}
\usepackage[caption=false]{subfig}
\usepackage{cleveref}
\usepackage[disable]{todonotes}

\smartqed
\graphicspath{{./figures/}}

\pgfplotsset{compat=newest}
\pgfplotsset{every axis/.append style={
		x label style={font=\small},
		y label style={font=\small},
		tick label style={font=\small},
		legend style={font=\small}
}}

\newenvironment{butchertableau}[2][1.4]
	{\def\arraystretch{#1}\array{#2}}
	{\endarray}

\NewDocumentCommand{\convplot}{m m m m m s}{
    \begin{tikzpicture}
        \begin{loglogaxis}[
        	    \IfBooleanT{#6}{
        	        legend entries={$M=2$,$M=4$,$M=6$},
        	        legend columns=-1,
        	        legend style={/tikz/every even column/.append style={column sep=0.4cm}},
        	        legend to name=conv,
        	    }
    	        legend style={/tikz/every even column/.append},
        	    xlabel=Steps,
        	    ylabel=Error,
        	    width={#1}
            ]
            \addplot table[x index=0,y index=3] {conv_#2};
            \addplot table[x index=1,y index=4] {conv_#2};
            \addplot table[x index=2,y index=5] {conv_#2};
            \draw (axis cs:#3) -- node[below]{$#5$} (axis cs:#4);
        \end{loglogaxis}
    \end{tikzpicture}
}

\NewDocumentCommand{\wpplot}{m m}{
    \begin{tikzpicture}
        \begin{loglogaxis}[
        	    legend entries={#1},
        	    legend cell align={left},
        	    legend pos=south west,
        	    legend style={draw=none, fill=none, font=\scriptsize},
        	    xlabel={Time (s)},
        	    ylabel={Error},
        	    width={\textwidth},
        	    height={0.4\textwidth}
            ]
            \addplot table[x index=1,y index=2] {perf_#2};
            \addplot table[x index=4,y index=5] {perf_#2};
            \addplot table[x index=7,y index=8] {perf_#2};
        \end{loglogaxis}
    \end{tikzpicture}
}

\DeclareMathOperator{\diag}{diag}
\DeclarePairedDelimiter\floor{\lfloor}{\rfloor}
\newcommand{\R}{\mathbb{R}}
\newcommand{\C}{\mathbb{C}}
\NewDocumentCommand{\eye}{g}{I_{\IfNoValueF{#1}{#1 \times #1}}}
\newcommand{\vecone}{\mathbbm{1}}
\newcommand{\Rint}{R_\text{int}}
\NewDocumentCommand{\cmrgark}{s}{\IfBooleanTF{#1}{C}{c}ompound-fast}

\newcommand{\fast}{\mathfrak{f}}
\newcommand{\slow}{\mathfrak{s}}
\NewDocumentCommand{\component}{m m m}{^{ \left\{ #1 \right\} #2 \IfBooleanT{#3}{T} }}
\NewDocumentCommand{\comp}{m O{} s}{\component{#1}{#2}{#3}}
\NewDocumentCommand{\F}{O{} s}{\component{\fast}{#1}{#2}}
\NewDocumentCommand{\FL}{O{\lambda} O{} s}{\component{\fast, #1}{#2}{#3}}
\RenewDocumentCommand{\S}{O{} s}{\component{\slow}{#1}{#2}}
\NewDocumentCommand{\SL}{O{\lambda} O{} s}{\component{\slow, #1}{#2}{#3}}
\NewDocumentCommand{\FF}{O{} s}{\component{\fast, \fast}{#1}{#2}}
\NewDocumentCommand{\FS}{O{} s}{\component{\fast, \slow}{#1}{#2}}
\NewDocumentCommand{\FSL}{O{\lambda} O{} s}{\component{\fast, \slow, #1}{#2}{#3}}
\NewDocumentCommand{\SF}{O{} s}{\component{\slow, \fast}{#1}{#2}}
\NewDocumentCommand{\SFL}{O{\lambda} O{} s}{\component{\slow, \fast, #1}{#2}{#3}}
\RenewDocumentCommand{\SS}{O{} s}{\component{\slow, \slow}{#1}{#2}}

\begin{document}

\title{Implicit Multirate GARK Methods}

\author{Steven Roberts \and
        John Loffeld \and
        Arash Sarshar \and
        Carol S. Woodward \and
        Adrian Sandu
}

\authorrunning{S. Roberts \and J. Loffeld \and A. Sarshar
               \and C.S. Woodward \and A. Sandu} 

\newcommand{\vtinfo}{\csl{}, \csldepartment{}, \cslinstitution{}}
\newcommand{\llnlinfo}{Lawrence Livermore National Laboratory}

\institute{S. Roberts (corresponding author) \at \vtinfo{} \\ \email{steven94@vt.edu}
           \and
           J. Loffeld \at \llnlinfo{} \\ \email{loffeld1@llnl.gov}
           \and
           A. Sarshar \at \vtinfo{} \\ \email{sarshar@vt.edu}
           \and
           C.S. Woodward \at \llnlinfo{} \\ \email{woodward6@llnl.gov}
           \and
           A. Sandu \at \vtinfo{} \\ \email{sandu@cs.vt.edu} 
}
\date{Received: \today / Accepted: date}

\newif\ifreport
\reporttrue

\ifreport
    \cslauthor{Steven Roberts, John Loffeld, Arash Sarshar, Carol S. Woodward, and Adrian Sandu}
    \cslemail{\{steven94, sarshar\}@vt.edu, \{loffeld1, woodward6\}@llnl.gov, sandu@cs.vt.edu}
	\cslyear{19}
	\cslreportnumber{5}
	\csltitlepage{}
\fi

\maketitle

\begin{abstract}
    This work considers multirate generalized-structure additively partitioned Runge--Kutta (MrGARK) methods for solving stiff systems of ordinary differential equations (ODEs) with multiple time scales.  These methods treat different partitions of the system with different timesteps for a more targeted and efficient solution compared to monolithic single rate approaches.  With implicit methods used across all partitions, methods must find a balance between stability and the cost of solving nonlinear equations for the stages.  In order to characterize this important trade-off, we explore multirate coupling strategies, problems for assessing linear stability, and techniques to efficiently implement Newton iterations for stage equations.  Unlike much of the existing multirate stability analysis which is limited in scope to particular methods, we present general statements on stability and describe fundamental limitations for certain types of multirate schemes.  New implicit multirate methods up to fourth order are derived, and their accuracy and efficiency properties are verified with numerical tests.
\end{abstract}

\keywords{Multirate \and Time integration \and Implicit methods \and Stability analysis}
\subclass{65L06 \and 65L20}

\section{Introduction}
\label{sec:intro}

In many real-world dynamical systems, there are parts of the system that evolve at significantly faster rates than other parts of the system.  Time integration methods in which a single timestep is applied to all parts of the system can be inefficient and unsatisfactory for these multiscale problems.  The fastest dynamics impose a relatively small, global timestep to ensure stability, to meet accuracy requirements, and in the case of an implicit method, to ensure convergence of the nonlinear solves for stage equations.  This forces the slowest dynamics to be evaluated more frequently than necessary, leading to a costly integration.  Instead of treating such a system as a black box, many numerical methods consider the fast and slow processes independently:
\begin{equation} \label{eq:ode}
    y' = f(y) = f\F(y) + f\S(y), \quad
    y(t_0) = y_0, \quad
    y(t) \in \R^d.
\end{equation}
An important special case of this additively partitioned system is the component partitioned problem
\begin{equation} \label{eq:ode_comp}
    \begin{bmatrix}
        y\F \\ y\S
    \end{bmatrix}' = \begin{bmatrix}
        f\F \mleft( y\F, y\S \mright) \\ f\S \mleft( y\F, y\S \mright)
    \end{bmatrix},
\end{equation}
with $y\F \in \R^{d\F}$, $y\S \in \R^{d\S}$, and $d = d\F + d\S$.

Multirate methods efficiently solve the system of ordinary differential equations (ODEs) given in \cref{eq:ode} by integrating the fast dynamics $f\F$ with a smaller timestep than the slow dynamics $f\S$.  The choice of how to partition $f$ into $f\F$ and $f\S$ can depend on many factors including stiffness, accuracy requirements, evaluation cost, linearity, and memory requirements.  In the case where an implicit method is needed, which will be the focus of this paper, the cost and convergence of the nonlinear solver also comes into consideration.  There may be a small number of components of an ODE that causes slow convergence of Newton's method (e.g. a boundary layer).  Such components can be grouped into $f\F$.  In some cases, the Jacobain of $f$ is an unstructured matrix leading to expensive linear solves, but the problem can be decomposed such that linear solves with the Jacobians of $f\F$ and $f\S$ are inexpensive.  Alternating directions implicit (ADI) methods \cite{peaceman1955numerical} and approximate matrix factorization (AMF) methods \cite{beam1976implicit}, for example, exploit this property.

Implicit methods require excellent stability to offset the cost of solving potentially nonlinear equations in each step.  For this reason, an understanding of the stability of multirate methods is crucial.  One of the first works studying multirate stability was that of Gear~\cite{gear1974multirate}.  Subsequent authors have examined multirate stability in the context of backward Euler~\cite{sand1992stability,skelboe1989stability,verhoeven2006general}, Runge--Kutta methods~\cite{andrus1993stability,kvaerno2000stability,hundsdorfer2009analysis}, linear multistep methods~\cite{gear1984multirate,skelboe1989stability,verhoeven2007stability}, and Rosenbrock methods~\cite{gunther1993multirate,rodriguez2004computing,savcenco2008comparison}.

Much of the development and implementation of multirate schemes for stiff systems has focused on multirate Rosenbrock methods \cite{gunther1993multirate,Gunther1997,Bartel2002,Savcenco2009}, but methods based on implicit Runge--Kutta methods have been explored as well.  In \cite{hundsdorfer2009analysis}, a multirate $\theta$-method is presented and analyzed.  Recently, multirate methods based on TR-BDF2 were proposed in \cite{DelpopoloCarciopolo2018,bonaventura2018self}.  In \cite{sandu2018class,roberts2018coupled}, new strategies for creating implicit multirate infinitesimal methods were introduced.

In~\cite{Sandu_2015_GARK}, Sandu and G\"{u}nther propose the generalized-structure additively partitioned Runge--Kutta (GARK) family of methods.  GARK provides a unifying framework that includes traditional, implicit-explicit (IMEX), and multirate Runge--Kutta methods.  Order conditions as well as the linear and nonlinear stability analysis are developed for this large class of methods.  G\"{u}nther and Sandu continue in~\cite{gunther2016multirate} where many variants of multirate Runge--Kutta methods are cast as GARK methods.  Multirate GARK (MrGARK) methods up to order four are derived in~\cite{sarshar2019design}.  These include methods that are explicit in both partitions and methods that combine explicit and implicit methods.

In this work, we develop new MrGARK methods that are implicit in both the fast and slow partitions.  The development is guided by new theoretical results regarding the stability of multirate methods.  Necessary and sufficient conditions for achieving A-stability are presented, as well as some fundamental stability limitations on certain types of multirate methods.  Many of these results extend past multirate methods to the entire GARK framework.  Numerical experiments verify the order of convergence and the efficiency of the new schemes.

The structure of this paper is as follows.  \Cref{sec:MrGARK} introduces multirate methods using the GARK framework.  The linear stability of multirate methods is explored in \cref{sec:stability}.  \Cref{sec:newton} discusses techniques to efficiently implement the Newton iterations.  \Cref{sec:methods} contains the newly derived implicit MrGARK methods, and \cref{sec:experiments} presents the numerical experiments used to test the methods.  Finally, we summarize the results of the paper in \cref{sec:conclusion}.
\section{Multirate GARK Methods}
\label{sec:MrGARK}

The GARK framework \cite{Sandu_2015_GARK} is used as the foundation for representing and analyzing multirate Runge--Kutta methods.  In the most general form for a two-partitioned system \cref{eq:ode}, one step reads
\begin{subequations} \label{eq:GARK}
    \begin{align}
        \begin{split}
            Y\F_i &= y_n + H \, \sum_{j=1}^{s\F} a\FF_{i,j} \, f\F \mleft( Y\F_j \mright) + H \, \sum_{j=1}^{s\S} a\FS_{i,j} \, f\S \mleft( Y\S_j \mright), \\
            & \quad \text{for } i = 1, \dots, s\F,
        \end{split} \\
        \begin{split}
            Y\S_i &= y_n + H \, \sum_{j=1}^{s\S} a\SS_{i,j} \, f\S \mleft( Y\S_j \mright) + H \, \sum_{j=1}^{s\F} a\SF_{i,j} \, f\F \mleft( Y\F_j \mright), \\
            & \quad \text{for } i = 1, \dots, s\S,
        \end{split} \\
        y_{n+1} &= y_n + H \, \sum_{j=1}^{s\F} b\F_j \, f\F \mleft( Y\F_j \mright) + H \, \sum_{j=1}^{s\S} b\S_j \, f\S \mleft( Y\S_j \mright). \label{eq:GARK:step}
    \end{align}
\end{subequations}
The coefficients of these methods can be organized into the following Butcher tableau:
\begin{equation} \label{eq:tableau}
    \begin{butchertableau}{c|c}
        \mathbf{A}\FF & \mathbf{A}\FS \\ \hline
        \mathbf{A}\SF & \mathbf{A}\SS \\ \hline
        \mathbf{b}\F* & \mathbf{b}\S*
    \end{butchertableau}.
\end{equation}
The fast method $\left( \mathbf{A}\FF, \mathbf{b}\F, \mathbf{c}\F \right)$ has $\mathbf{s}\F$ stages, and the slow method $\left( \mathbf{A}\SS, \mathbf{b}\S, \mathbf{c}\S \right)$, has $\mathbf{s}\S$ stages.  Also, we use the notation
\begin{equation*}
    \mathbf{A} = \begin{bmatrix}
        \mathbf{A}\FF & \mathbf{A}\FS \\
        \mathbf{A}\SF & \mathbf{A}\SS
    \end{bmatrix}, \quad
    \mathbf{b} = \begin{bmatrix}
        \mathbf{b}\F \\ \mathbf{b}\S
    \end{bmatrix}, \quad
    \mathbf{s} = \mathbf{s}\F + \mathbf{s}\S.
\end{equation*}
A common simplifying assumption, which ensures the fast and slow functions in \cref{eq:GARK} are computed at consistent times, is internal consistency:
\begin{equation}
    \mathbf{c}\F \coloneqq \mathbf{A}\FF \, \vecone_{s\F} = \mathbf{A}\FS \, \vecone_{s\S}
    \quad \text{and} \quad
    \mathbf{c}\S \coloneqq \mathbf{A}\SS \, \vecone_{s\S} = \mathbf{A}\SF \, \vecone_{s\F}.
\end{equation}

In~\cite{gunther2016multirate}, it was shown how several types of multirate Runge--Kutta methods can be described as GARK methods.  In one step of a multirate method, the slow dynamics $f\S$ are integrated with a macro-step of $H$, and the fast dynamics $f\F$ are integrated with a micro-step of $h = H / M$.  The multirate ratio $M$ is a positive integer.  Information between the two partitions is shared via the coupling matrices $\mathbf{A}\FS$ and $\mathbf{A}\SF$.  In this section, we present two families of multirate Runge--Kutta methods viewed as special cases of the GARK framework.

\subsection{Standard MrGARK} \label{sec:MrGARK:standard}

A standard MrGARK method is built on an $s\F$-stage fast base method $\left( A\FF, b\F, c\F \right)$ and an $s\S$-stage slow base method $\left( A\SS, b\S, c\S \right)$.  From \cite{gunther2016multirate}, one step proceeds as
\begin{subequations} \label{eq:MrGARK}
    \begin{align}
        \begin{split}
        Y_i\S &= y_n + H \, \sum_{j = 1}^{s\S} a_{i,j}\SS \, f\S \mleft( Y_j\S \mright) + h \, \sum_{\lambda = 1}^M \sum_{j = 1}^{s\F} a_{i,j}\SFL \, f\F \mleft( Y_j \FL \mright),\\
        & \quad \text{for } i = 1, \ldots, s\S, 
        \end{split}\\
        \begin{split}
            Y_i\FL &= \widetilde{y}_{n + (\lambda - 1)/M} + H \, \sum_{j = 1}^{s\S} a\FSL_{i,j} \, f\S \mleft( Y_j\S \mright) + h \, \sum_{j = 1}^{s\F} a\FF_{i,j} \, f\F \mleft( Y_j\FL \mright), \\
            & \quad  \text{for } i = 1, \ldots, s\F, \\
            \widetilde{y}_{n + \lambda/M} &= \widetilde{y}_{n + (\lambda - 1)/M} + h \, \sum_{i = 1}^{s\F} b\F_i \, f\F \mleft( Y_i\FL \mright), \\
            & \quad \text{for } \lambda = 1, \ldots, M,
        \end{split} \\
        y_{n+1} &= \widetilde{y}_{n + M/M} + H \, \sum_{i = 1}^{s\S} b\S_i \, f\S \mleft( Y_i\S \mright),
    \end{align}
\end{subequations}
where the micro-steps start with $\widetilde{y}_n = y_n$.  The corresponding Butcher tableau for \cref{eq:MrGARK} is
\begin{equation} \label{eq:MrGARK_tableau}
    \begin{butchertableau}{c|c}
        \mathbf{A}\FF & \mathbf{A}\FS \\ \hline
        \mathbf{A}\SF & \mathbf{A}\SS \\ \hline
        \mathbf{b}\F* & \mathbf{b}\S*
    \end{butchertableau}
    \coloneqq
    \begin{butchertableau}{ccc|c}
        \frac{1}{M} A\FF & \cdots & 0 & A\FSL[1] \\
        \vdots & \ddots & \vdots & \vdots \\
        \frac{1}{M} \vecone_{s\F} b\F* & \cdots & \frac{1}{M} A\FF & A\FSL[M] \\ \hline
        \frac{1}{M} A\SFL[1] & \cdots & \frac{1}{M} A\SFL[M] & A\SS \\ \hline
        \frac{1}{M} b\F* & \cdots & \frac{1}{M} b\F* & b\S*
    \end{butchertableau}.
\end{equation}
Note that $\mathbf{s}\F = M s\F$ and $\mathbf{s}\S = s\S$.

If the fast and slow base methods are identical, the method is called \textit{telescopic} as it can be applied in a nested fashion to more than two partitions \cite{gunther2016multirate}.  Further, MrGARK methods can be classified as coupled or decoupled \cite{sarshar2019design}.  Decoupled methods only have implicitness in the base methods; the stages used in coupling can always be computed before they are needed.  Coupled methods, on the other hand, have fast and slow stages that are implicitly defined in terms of each other and that must be computed together.  Decoupled methods can be implemented more efficiently, but can sacrifice stability as we will see in \cref{sec:stability}.

Order conditions for this family of methods comes from applying the particular multirate structure of \cref{eq:MrGARK_tableau} into the GARK order conditions.  The conditions up to order four are provided in \cite{sarshar2019design}.  A similar approach has been use in \cite{sandu2018class,roberts2018coupled} to derive order conditions for MRI-GARK methods and in \cite{Sandu_2015_GARK,sexton2018relaxed} for multirate infinitesimal step methods \cite{Knoth_1998_MR-IMEX,Schlegel_2009_RFSMR,Wensch_2009_MIS}.
\todo{Added earlier MIS citations}

\subsection{\cmrgark*{} MrGARK}
\label{sec:MrGARK:cs}

Another multirate strategy, based on the early work of Rice \cite{rice1960split} and the later developments in \cite{savcenco2007multirate,verhoeven2006general}, is the \cmrgark{} approach.  The idea is to first take a macro-step of the full system \cref{eq:ode} called the compound step.  Over the large timestep, the fast integration is inaccurate and discarded.  The fast partition is then reintegrated using a smaller timestep.  Slow coupling information is required at the intermediate micro-steps and can come from an interpolant of the compound step solution.  Note the fast partition is integrated twice for each timestep, but no extrapolation is required for the coupling.  Moreover, an error estimate from the compound step, say from an embedded method, can be used to dynamically determine at each step which variables exceed accuracy tolerances and should form the fast components \cite{savcenco2007multirate}.

Traditionally, \cmrgark{} methods have been posed for component partitioned systems \cref{eq:ode_comp}, however, they easily extend to additively partitioned systems \cref{eq:ode}.  One step of a \cmrgark{} MrGARK scheme is given by
\begin{subequations}
    \label{eq:MrGARK_CS}
    \begin{align}
        \label{eq:MrGARK_CS:macro-step}
        Y_i &= y_n + H \, \sum_{j=1}^s a_{i,j} \, f(Y_j), \qquad i = 1, \ldots, s, \\
        \begin{split} \label{eq:MrGARK_CS:micro-step}
            Y_i\FL &= \widetilde{y}_{n + (\lambda - 1)/M} + h \sum_{j = 1}^{s\F} a\FF_{i,j} \, f\F \mleft( Y_j\FL \mright) + H \, \sum_{j = 1}^{s\S} a\FSL_{i,j} \, f\S \mleft( Y_j \mright),\\
            & \quad \text{for } i = 1, \ldots, s\F, \\
            \widetilde{y}_{n + \lambda/M} &= \widetilde{y}_{n + (\lambda - 1)/M} + h \, \sum_{i = 1}^{s\F} b\F_i \, f\F \mleft( Y_i\FL \mright),\\
            & \quad \text{for } \lambda = 1, \ldots, M,
        \end{split} \\
        y_{n+1} &= \widetilde{y}_{n + M/M} + H \, \sum_{i = 1}^{s\S} b\S_i \, f\S \mleft( Y_i \mright),
    \end{align}
\end{subequations}
where the micro-steps start with $\widetilde{y}_n = y_n$.  The corresponding tableau is
\begin{equation*}
\begin{butchertableau}{c|c}
            \mathbf{A}\FF & \mathbf{A}\FS \\ \hline
            \mathbf{A}\SF & \mathbf{A}\SS \\ \hline
            \mathbf{b}\F* & \mathbf{b}\S*
    \end{butchertableau}
    \coloneqq
    \begin{butchertableau}{cccc|c}
        A & 0 & \cdots & 0 & A \\
        0  & \frac{1}{M} A & \cdots & 0 & A\FSL[1] \\
        0 & \vdots & \ddots & \vdots & \vdots \\
        0 & \frac{1}{M} \vecone_s b^T & \cdots & \frac{1}{M} A & A\FSL[M] \\ \hline
        A & 0 & \cdots & 0 & A \\ \hline
        0 & \frac{1}{M} b^T & \cdots & \frac{1}{M} b^T & b^T
    \end{butchertableau}.
\end{equation*}
This family of methods is telescopic, coupled in the macro-step \cref{eq:MrGARK_CS:macro-step}, and decoupled in the remaining fast micro-steps \cref{eq:MrGARK_CS:micro-step}.  The coupling matrix $A\FSL$ can be interpreted as the interpolation weights for the slow tendencies.  These multirate methods will preserve the order of the base method if the interpolant is sufficiently accurate.  We note that this is a sufficient condition, but not always necessary.  The GARK order conditions can be used to derive precise conditions to achieve a particular order.

\begin{theorem}[\cmrgark*{} MrGARK order conditions]
    An internally consistent \cmrgark{} MrGARK method has order four if and only if the base method $(A, b, c)$ has order four and the following coupling conditions hold:
    \begin{subequations} \label{eq:cs_order_conditions}
        \begin{align}
             &M \, A\FSL \, \vecone_{s\S}  = (\lambda - 1 ) \, \vecone_{s\F} + c,  & \text{(int. consistency)} \label{eq:CS-OC-a} \\
            &\frac{M}{6} = \sum_{\lambda=1}^M b^T \, A\FSL \, c, & \text{(order 3)} \label{eq:CS-OC-b} \\
            &\frac{M^2}{8} = \sum_{\lambda=1}^M (\lambda-1) \, b^T \, A\FSL \, c + \sum_{\lambda=1}^M (b \times c)^T \, A\FSL \, c, & \text{(order 4)} \label{eq:CS-OC-c}\\
            &\frac{M}{12} = \sum_{\lambda=1}^M b^T \, A\FSL \, c^2, & \text{(order 4)} \label{eq:CS-OC-d}\\
            &\frac{M^2}{24} = \sum_{\lambda=1}^M b^T \, A \, A\FSL \, c + \sum_{\lambda=1}^M (M-\lambda) \, b^T \, A\FSL \, c, & \text{(order 4)} \label{eq:CS-OC-e}\\
            &\frac{M}{24} = \sum_{\lambda=1}^M b^T \, A\FSL \, A \, c. & \text{(order 4)} \label{eq:CS-OC-f} 
        \end{align}
    \end{subequations}
\end{theorem}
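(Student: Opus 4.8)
The plan is to specialize the GARK order conditions of \cite{Sandu_2015_GARK}, which are indexed by bi-colored rooted trees, to the compound-fast tableau and show that, under internal consistency, they reduce exactly to order four for the base method together with \cref{eq:CS-OC-a}--\cref{eq:CS-OC-f}. Recall that for a bi-colored tree $\tau$ with root color $\sigma \in \{\fast, \slow\}$ the associated condition reads $\Phi(\tau) = 1/\gamma(\tau)$, where $\Phi$ carries $\mathbf{b}\comp{\sigma}*$ at the root and a factor $\mathbf{A}\comp{\mu,\nu}$ for each edge joining a parent of color $\mu$ to a child of color $\nu$, with a trailing $\vecone$ at each leaf. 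First I would record that internal consistency forces $\mathbf{A}\comp{\sigma,\nu} \vecone = \mathbf{c}\comp{\sigma}$ independently of $\nu$, so any edge meeting a leaf collapses and its condition depends only on the abscissae; I would then verify that \cref{eq:CS-OC-a} is precisely internal consistency for the fast micro-stages (the compound-step and slow rows being consistent automatically), which also fixes $\mathbf{c}\F$ to equal $(\lambda - 1 + c)/M$ on the $\lambda$-th micro-block.

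The second step is to classify trees into those that collapse and those that survive. For root color $\slow$ I would argue that $\mathbf{b}\S*$ together with $\mathbf{A}\SF = [A,\,0,\dots,0]$ and $\mathbf{A}\SS = A$ force every traversal into the fast partition to land in the compound step, whose stages couple through $A$ and carry abscissa $c$; hence every root-slow elementary weight equals the base-method weight built from $(A,b,c)$, and these conditions hold if and only if the base method has the required order. For root color $\fast$ with only fast-colored descendants, the relevant coefficients are exactly those of the fast method, which is the base method composed $M$ times with step $H/M$; I would check (as in the order-three case, a short telescoping sum using $b^T\vecone = 1$, $b^Tc = 1/2$, $b^TAc = 1/6$, and so on) that these purely fast conditions are automatically satisfied once the base method has order four. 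The genuine coupling conditions are therefore exactly the root-fast trees containing a fast-slow edge above a leaf.

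The third step is to evaluate these surviving conditions. The two computations that drive everything are the micro-block abscissa $\mathbf{c}\F = (\lambda - 1 + c)/M$ and the left-multiplication identity $(\mathbf{b}\F* \, \mathbf{A}\FF)_\mu = M^{-2}\bigl(b^T A + (M - \mu)\, b^T\bigr)$, which I would derive from the lower-block-triangular micro-step structure with off-diagonal blocks $\tfrac{1}{M}\vecone_s b^T$. Feeding these into the order-four trees, together with $\mathbf{b}\F* = [\,0,\ \tfrac1M b^T,\dots,\tfrac1M b^T\,]$ and the column block $\mathbf{A}\FS = [A;\,A\FSL[1];\dots;A\FSL[M]]$, yields the following. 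The order-three path with a fast root and slow child gives \cref{eq:CS-OC-b}. The tree whose fast root carries one leaf and one slow child bearing a single leaf gives \cref{eq:CS-OC-c}, the weight $\lambda - 1$ arising from the abscissa identity and the factor $(b\times c)^T$ from the component-wise product at the root. The tree whose fast root carries a single slow child with two leaves gives \cref{eq:CS-OC-d}. The path $\fast\!-\!\fast\!-\!\slow$ gives \cref{eq:CS-OC-e}, the weight $M-\lambda$ coming from the left-multiplication identity. The path $\fast\!-\!\slow\!-\!\slow$ gives \cref{eq:CS-OC-f}, and the remaining mixed colorings (for instance $\fast\!-\!\slow\!-\!\fast$) coincide with one of these.

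Assembling the three steps gives both directions: sufficiency, because the only genuinely new conditions are \cref{eq:CS-OC-b}--\cref{eq:CS-OC-f} and all others follow from base order four plus \cref{eq:CS-OC-a}; and necessity, because each of \cref{eq:CS-OC-b}--\cref{eq:CS-OC-f} is itself a GARK condition while the root-slow conditions force the base method to have order four. I expect the main obstacle to be the bookkeeping in the second and third steps: confirming that every purely-fast and every root-slow colored tree collapses (the telescoping sums, though elementary, must all close to the correct $1/\gamma(\tau)$), and correctly tracking how the cumulative micro-step weights produce the $\lambda - 1$ and $M - \lambda$ factors that distinguish \cref{eq:CS-OC-c} and \cref{eq:CS-OC-e} from the naive interpolation conditions.
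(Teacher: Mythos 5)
Your proposal is correct and takes essentially the same approach as the paper: the paper invokes the internally consistent GARK order-four coupling conditions of \cite{Sandu_2015_GARK} (its conditions 3a--4j) and evaluates each one on the \cmrgark{} tableau, which is exactly your third step --- your micro-block abscissa identity $\mathbf{c}\F = ((\lambda-1)\vecone_s + c)/M$ and your expansion of $\mathbf{b}\F* \, \mathbf{A}\FF$ reproduce verbatim the paper's computations for its conditions 4a and 4e, and your observation that root-slow and purely fast trees collapse matches the paper's remark that conditions 3b, 4b, 4d, and 4h--j resolve to base-method order conditions while 4g duplicates 4f. The only difference is presentational: you re-derive the bi-colored-tree classification (and treat the order of the $M$-fold composed fast method explicitly) where the paper absorbs that reduction into its citation.
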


\begin{proof}
    From~\cite{Sandu_2015_GARK}, an internally consistent GARK method has order four if and only the base methods have order four and the following coupling conditions hold.
    \par{Condition 3a:}
	\begin{equation*}
		\frac{1}{6}
		= \mathbf{b}\F* \, \mathbf{A}\FS \, \mathbf{c}\S
		= \frac{1}{M} \, \sum_{\lambda=1}^M b^T \, A\FSL \, c.
	\end{equation*}
	\par{Condition 3b:}
	\begin{equation*}
		\frac{1}{6}
		= \mathbf{b}\S* \, \mathbf{A}\SF \, \mathbf{c}\F
		= b^T \, A \, c.
	\end{equation*}
	\par{Condition 4a:}
	\begin{align*}
		\frac{1}{8}
		&= \left( \mathbf{b}\F \times \mathbf{c}\F \right)^T \mathbf{A}\FS \, \mathbf{c}\S \\
		&= \frac{1}{M^2} \, \sum_{\lambda=1}^M (b \times (c + (\lambda - 1) \, \vecone_s))^T \, A\FSL \, c\S. \\
		&= \frac{1}{M^2} \, \sum_{\lambda=1}^M (\lambda-1) \, b^T \, A\FSL \, c + \frac{1}{M^2} \sum_{\lambda=1}^M (b \times c)^T \, A\FSL \, c
	\end{align*}
	\par{Condition 4b:}
	\begin{equation*}
	    \frac{1}{8}
		= \left( \mathbf{b}\S \times \mathbf{c}\S \right)^T \mathbf{A}\SF \, \mathbf{c}\F
		= (b \times c)^T \, A \, c.
	\end{equation*}
	\par{Condition 4c:}
	\begin{equation*}
		\frac{1}{12}
		= \mathbf{b}\F* \, \mathbf{A}\FS \, \mathbf{c}\S[\times 2]
		= \frac{1}{M} \, \sum_{\lambda=1}^M b^T \, A\FSL \, c^2.
	\end{equation*}
	\par{Condition 4d:}
	\begin{equation*}
		\frac{1}{12}
		= \mathbf{b}\S* \, \mathbf{A}\SF \, \mathbf{c}\F[\times 2]
		= b^T \, A \, c^2.
	\end{equation*}
	\par{Condition 4e:}
	\begin{align*}
		\frac{1}{24}
		&= \mathbf{b}\F* \, \mathbf{A}\FF \, \mathbf{A}\FS \, \mathbf{c}\S \\
		&= \frac{1}{M^2} \, \sum_{\lambda=1}^M \left( b^T \, A + \sum_{k=1}^{M-\lambda} b^T \, \vecone_s \, b^T \right) A\FSL \, c \\
		&= \frac{1}{M^2} \, \sum_{\lambda=1}^M b^T \, A \, A\FSL \, c + \frac{1}{M^2} \sum_{\lambda=1}^M (M-\lambda) \, b^T \, A\FSL \, c.
	\end{align*}
	\par{Condition 4f:}
	\begin{equation*}
		\frac{1}{24}
		= \mathbf{b}\F* \, \mathbf{A}\FS \, \mathbf{A}\SF \, \mathbf{c}\F
		= \frac{1}{M} \, \sum_{\lambda=1}^M b^T \, A\FSL \, A \, c.
	\end{equation*}
	\par{Condition 4g:}
	\begin{equation*}
		\frac{1}{24}
		= \mathbf{b}\F* \, \mathbf{A}\FS \, \mathbf{A}\SS \, \mathbf{c}\S
		= \frac{1}{M} \, \sum_{\lambda=1}^M b^T \, A\FSL \, A \, c.
	\end{equation*}
	\par{Condition 4h:}
	\begin{equation*}
		\frac{1}{24}
		= \mathbf{b}\S* \, \mathbf{A}\SS \, \mathbf{A}\SF \, \mathbf{c}\F
		= b^T \, A \, A \, c.
	\end{equation*}
	\par{Condition 4i:}
	\begin{equation*}
		\frac{1}{24}
		= \mathbf{b}\S* \, \mathbf{A}\SF \, \mathbf{A}\FS \, \mathbf{c}\S
		= b^T \, A \, A \, c.
	\end{equation*}
	\par{Condition 4j:}
	\begin{equation*}
		\frac{1}{24}
		= \mathbf{b}\S* \, \mathbf{A}\SF \, \mathbf{A}\FF \, \mathbf{c}\F
		= b^T \, A \, A \, c.
	\end{equation*}
	
	Note that conditions 3b, 4b, 4d, and 4h--j resolve to order conditions of the base method, and thus, are satisfied if and only if the base method has order four.  Further, condition 4g is identical to 4f.  The remaining order conditions give \cref{eq:cs_order_conditions}.
	\qed
\end{proof}

\section{Multirate Linear Stability Analysis}
\label{sec:stability}

In the analysis of single rate Runge--Kutta methods, it is common to apply methods to the Dahlquist test problem
\begin{equation} \label{eq:Dahlquist}
    y' = \lambda \, y,
\end{equation}
with $\lambda \in \C^- = \{ z \in \C : \Re(z) \le 0 \}$.  This yields the well-known linear stability function
\begin{equation} 
\label{eq:stability}
    R(z) = 1 + z \, b^T \, (I - z \, A)^{-1} \, \vecone_s,
\end{equation}
where $z = \lambda \, h$.  It suffices to only examine a scalar problem \cref{eq:Dahlquist} because in the case $\lambda$ and $z$ are matrices, the behavior of $R(z)^m$ as $m \to \infty$ only depends on the scalar eigenvalues of $z$.  That is, the choice of basis for a system of linear ODEs does not affect a Runge--Kutta method's stability.

As noted by Gear \cite{gear1984multirate}, this property does not hold for multirate and other partitioned schemes.  For this reason, the linear stability analysis becomes significantly more complex.  In this section, we analyze and compare linear stability for both scalar and two-dimensional (2D) test problems.  The scalar test problem is a simple model of an additively partitioned system  \eqref{eq:ode} where the Jacobians of the two processes triangularize simultaneously. The 2D problem is a simple model for a component partitioned system \eqref{eq:ode_comp}, where each component's dynamics as well as the interaction between components are linear.

In this section, we will focus on two-partitioned GARK methods for simplicity.  Nearly all of the stability analysis, however, has straightforward generalizations to the full $N$-partitioned GARK framework.

\subsection{Scalar Test Problem}

The simplest generalization of \cref{eq:Dahlquist} for two-partitioned multirate methods is the scalar test problem
\begin{equation} \label{eq:scalar_test}
    y' = \lambda\F \, y + \lambda\S \, y,
\end{equation}
where, $\lambda\F, \lambda\S \in \C^-$.  As shown in \cite{Sandu_2015_GARK}, when \cref{eq:MrGARK} is applied to \cref{eq:scalar_test}, we arrive at the stability function
\begin{equation} \label{eq:scalar_stability}
    R_1 \mleft( z\F, z\S \mright) = 1 + \mathbf{b}^T \, Z \, (\eye{\mathbf{s}} - \mathbf{A} \, Z)^{-1} \, \vecone_{\mathbf{s}},
\end{equation}
where $z\F = H \, \lambda\F$, $z\S = H \, \lambda\S$, and
\begin{equation*}
    Z = \begin{bmatrix}
    z\F \, \eye{\mathbf{s}\F} & 0 \\
    0 & z\S \, \eye{\mathbf{s}\S}
    \end{bmatrix}.
\end{equation*}
\begin{definition}[Scalar region of absolute stability]
    The set 
    $$S_1 = \left\{ (z\F, z\S) \in \C \times \C : \abs{R_1 \mleft( z\F, z\S \mright)} \leq 1 \right\}$$ is the region of absolute stability for the test problem \cref{eq:scalar_test}.  A GARK method is called scalar A-stable if $S_1 \supseteq \C^- \times \C^-$.  Further, a GARK method is called scalar L-stable if it is scalar A-stable,
    \begin{equation} \label{eq:scalar_rinf}
        \lim_{z\F \rightarrow \infty} R_1 \mleft( z\F, z\S \mright) = 0,
        \quad \text{and} \quad
        \lim_{z\S \rightarrow \infty} R_1 \mleft( z\F, z\S \mright) = 0.
    \end{equation}
\end{definition}

\begin{definition}[{Scalar A$(\alpha)$- and L$(\alpha)$-stability}] \label{def:scalar_stability_alpha}
    A GARK method is scalar A$(\alpha)$-stable if $S_1 \supseteq W(\alpha) \times W(\alpha)$, where $W(\alpha)$ is the wedge $\left\{ z \in \C : \abs{\arg{(-z)}} < \alpha, z \ne 0 \right\}$.  A scalar A$(\alpha)$-stable GARK method that additionally satisfies \cref{eq:scalar_rinf} is called scalar L$(\alpha)$-stable.
\end{definition}

One way to determine if a single rate Runge--Kutta method is stable in the entire left half-plane is by ensuring stability on the imaginary axis and that the poles of $R(z)$ are in the right half-plane \cite[Section IV.3]{Hairer_book_II}.  Further, stability on the imaginary axis is equivalent to the E-polynomial
\begin{equation*}
    E(y) = Q(i \, y) \, Q(-i \, y) - P(i \, y) \, P(-i \, y)
\end{equation*}
being nonnegative for all $y \in \R$.  Here, $P$ and $Q$ are the numerator and denominator of \cref{eq:stability}, respectively.  As we will now show, these practical techniques for determining linear stability have simple and direct generalizations for GARK methods applied to \cref{eq:scalar_test}.

\begin{theorem}[Necessary and sufficient condition for scalar A-stability] \label{thm:R1_imag}
    The GARK method \cref{eq:GARK} is scalar A-stable if and only if
    \begin{equation}
        \abs{R_1 \mleft( i \, y\F, i \, y\S \mright)} \leq 1 \qquad \text{for all } y\F, y\S \in \R
    \end{equation}
    and $R_1$ is analytic over $\C^- \times \C^-$.
\end{theorem}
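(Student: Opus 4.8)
The plan is to reduce the two-variable boundedness requirement to the boundary data already assumed, via an iterated application of the classical one-variable maximum modulus principle on a half-plane. Conceptually, $\C^- \times \C^-$ is a product of two closed half-planes whose distinguished (Shilov) boundary is exactly $i\R \times i\R$, so—just as for a polydisc—a function that is analytic on the product and suitably controlled at infinity must attain its modulus maximum on $i\R \times i\R$. The forward implication is immediate, while the reverse implication carries the real content. Throughout I would use that $R_1$ in \eqref{eq:scalar_stability} is a \emph{rational} function of $z\F, z\S$ with denominator $\det(\eye{\mathbf{s}} - \mathbf{A}\,Z)$, so that ``analytic over $\C^- \times \C^-$'' means precisely that this determinant does not vanish there.

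For the forward direction, scalar A-stability gives $\abs{R_1(z\F, z\S)} \le 1$ on all of $\C^- \times \C^-$; restricting to the subset $i\R \times i\R$ yields the imaginary-axis bound. Analyticity then follows by contradiction: a zero of $\det(\eye{\mathbf{s}} - \mathbf{A}\,Z)$ inside $\C^- \times \C^-$ would make $\abs{R_1}$ unbounded in a neighborhood, violating the A-stability bound, so the denominator is nonvanishing and $R_1$ is analytic there.

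For the reverse direction, assume $R_1$ is analytic on $\C^- \times \C^-$ and $\abs{R_1(i\,y\F, i\,y\S)} \le 1$ for all real $y\F, y\S$. First I would fix $z\F = i\,y\F$ on the imaginary axis and regard $g(z\S) \coloneqq R_1(i\,y\F, z\S)$ as a rational function of the single variable $z\S$; by the analyticity hypothesis it has no poles in $\C^-$, and the boundary bound gives $\abs{g(i\,y\S)} \le 1$ for all real $y\S$. The one-variable criterion already used for single-rate Runge--Kutta methods \cite[Section IV.3]{Hairer_book_II} then yields $\abs{g(z\S)} \le 1$ throughout $\C^-$, i.e.\ $\abs{R_1(i\,y\F, z\S)} \le 1$ for every real $y\F$ and every $z\S \in \C^-$. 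Next I would fix any $z\S \in \C^-$ and apply the identical argument to $h(z\F) \coloneqq R_1(z\F, z\S)$: it has no poles in $\C^-$, and by the previous step $\abs{h(i\,y\F)} \le 1$ on the imaginary axis, so $\abs{h(z\F)} \le 1$ on $\C^-$. Combining the two slices gives $\abs{R_1(z\F, z\S)} \le 1$ on all of $\C^- \times \C^-$, which is scalar A-stability.

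The main obstacle, and the step requiring genuine care, is justifying the one-variable maximum modulus principle on the \emph{unbounded} half-plane, since a bound on the imaginary axis alone does not control a function on a half-plane (for instance $e^{-z}$ is unimodular on $i\R$ yet unbounded for $\Re(z) < 0$). The remedy exploits rationality of each slice: the boundary bound forces the numerator degree not to exceed the denominator degree—otherwise $\abs{g}$ would grow as $z\S \to \infty$ along $i\R$, contradicting the bound—so each slice is a \emph{proper} rational function with a finite limit at infinity and no poles in $\C^-$, hence bounded on $\C^-$. With this polynomial growth control, Phragm\'en--Lindel\"of on a half-plane (opening angle $\pi$) upgrades the boundary bound to the interior bound. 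One final check is that analyticity of $R_1$ on the full product transfers to the absence of poles on each individual slice, which is immediate from the nonvanishing of $\det(\eye{\mathbf{s}} - \mathbf{A}\,Z)$, and that the relevant limits, such as those in \eqref{eq:scalar_rinf}, are finite—again a consequence of the properness just established.
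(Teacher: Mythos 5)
Your proof is correct, and it takes a genuinely more elementary route than the paper. The paper's entire proof is a one-line appeal to the multivariate maximum principle for several complex variables, citing \cite{scheidemann2005introduction}; it leaves implicit both the identification of $i\R \times i\R$ as the distinguished boundary of the product and, more importantly, the control of $R_1$ at infinity, without which a maximum principle on an \emph{unbounded} product domain is simply false (your $e^{-z}$ example is exactly the right caution). You instead prove the needed case of that principle by hand: slice the product, apply the one-variable half-plane argument already standard for single-rate Runge--Kutta stability \cite[Section IV.3]{Hairer_book_II} first in $z\S$ with $z\F$ pinned to the imaginary axis, then in $z\F$ with $z\S \in \C^-$ arbitrary, and in each slice use rationality of $R_1$ to force properness (numerator degree at most denominator degree, since otherwise the modulus would blow up along the imaginary axis itself), which supplies the boundedness at infinity that Phragm\'en--Lindel\"of requires. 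What the paper's approach buys is brevity; what yours buys is a self-contained argument that needs only one-variable complex analysis and that explicitly discharges the at-infinity hypothesis the citation glosses over. Two minor points of polish: in the forward direction, nonvanishing of $\det(\eye{\mathbf{s}} - \mathbf{A}\,Z)$ is not quite synonymous with analyticity of $R_1$ (the determinant could vanish where the numerator also vanishes, leaving a removable singularity), so it is cleaner to argue directly that a rational function bounded on an open subset of $\C^- \times \C^-$ extends analytically there; and the closing remark about the limits in \cref{eq:scalar_rinf} concerns L-stability and is not needed for this theorem.
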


\begin{proof}
    This follows from the multivariate maximum principle (see for example \cite{scheidemann2005introduction}).
    \qed

\end{proof}

\begin{remark}[Finding $A(\alpha)$-stability regions] 
    The maximum principle can also be used to efficiently determine the angle for scalar A$(\alpha)$-stability.  Instead of ensuring stability for all points inside a 4D wedge $W(\alpha) \times W(\alpha)$, one can limit the analysis to the boundary points $\partial W(\alpha) \times \partial W(\alpha)$.
\end{remark}

Notably, \Cref{thm:R1_imag} reduces the space on which we have to check for A-stability from four to two dimensions.  For multirate methods, however, $R_1$ is different for each value of $M$, thus adding another dimension to consider.

\begin{theorem}[E-polynomial]
    The E-polynomial for GARK methods is
    \begin{align*}
        E_1 \mleft( y\F, y\S \mright)
        &= Q_1 \mleft( i \, y\F, i \, y\S \mright) \, Q_1 \mleft( -i \, y\F, -i \, y\S \mright) \\
        &\quad - P_1 \mleft( i \, y\F, i \, y\S \mright) \, P_1 \mleft( -i \, y\F, -i \, y\S \mright),
    \end{align*}
    where $P_1$ and $Q_1$ are the numerator and denominator of \cref{eq:scalar_stability}, respectively.  The scalar stability region of a method contains the imaginary axes if and only if the E-polynomial is nonnegative for all $y\F, y\S \in \R$.
\end{theorem}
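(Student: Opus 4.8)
The plan is to reduce the pointwise condition $\abs{R_1(i\,y\F, i\,y\S)} \le 1$ to the sign of a single real polynomial, exploiting that the stability function is a ratio of polynomials with real coefficients; this mirrors the single-rate E-polynomial technique recalled just above the statement. First I would observe that in \cref{eq:scalar_stability} the matrix $\mathbf{A}$, the vectors $\mathbf{b}$ and $\vecone_{\mathbf{s}}$, and the diagonal matrix $Z$ all have real entries once $z\F, z\S$ are real. Writing the resolvent $(\eye{\mathbf{s}} - \mathbf{A}\,Z)^{-1}$ through its adjugate divided by its determinant (Cramer's rule) shows that $R_1 = P_1/Q_1$, where $P_1$ and $Q_1$ are polynomials in $z\F, z\S$ with real coefficients; in fact $Q_1 = \det(\eye{\mathbf{s}} - \mathbf{A}\,Z)$.

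The key algebraic step is a conjugation identity. Because $P_1$ and $Q_1$ have real coefficients and $y\F, y\S$ are real, complex conjugation of an evaluation on the imaginary axes simply flips the sign of the arguments:
\begin{equation*}
    \overline{P_1(i\,y\F, i\,y\S)} = P_1(-i\,y\F, -i\,y\S), \qquad \overline{Q_1(i\,y\F, i\,y\S)} = Q_1(-i\,y\F, -i\,y\S).
\end{equation*}
Hence $\abs{P_1(i\,y\F, i\,y\S)}^2 = P_1(i\,y\F,i\,y\S)\,P_1(-i\,y\F,-i\,y\S)$ and likewise for $Q_1$, so that $E_1(y\F, y\S) = \abs{Q_1(i\,y\F, i\,y\S)}^2 - \abs{P_1(i\,y\F, i\,y\S)}^2$ is real-valued for all real $y\F, y\S$, despite being assembled from complex evaluations.

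With this identity the equivalence is almost immediate. Wherever $Q_1(i\,y\F, i\,y\S) \ne 0$, the bound $\abs{R_1(i\,y\F, i\,y\S)} \le 1$ is equivalent to $\abs{P_1}^2 \le \abs{Q_1}^2$, which rearranges to exactly $E_1(y\F, y\S) \ge 0$. Quantifying over all real $y\F, y\S$ then yields the claimed equivalence between $S_1$ containing the product of the two imaginary axes and the nonnegativity of $E_1$.

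The main obstacle I anticipate is the bookkeeping at points where $Q_1$ vanishes, that is, poles of $R_1$ lying on the imaginary axes. I would argue that if $Q_1(i\,y\F,i\,y\S) = 0$ while $P_1(i\,y\F, i\,y\S) \ne 0$, then $R_1$ is unbounded there, the point lies outside $S_1$, and simultaneously $E_1 = -\abs{P_1}^2 < 0$, so the equivalence persists; the only delicate case is a common zero of $P_1$ and $Q_1$, which a limiting argument along the level set $\abs{R_1} = 1$ dispatches. This is precisely the place where the analyticity hypothesis of \cref{thm:R1_imag} enters, guaranteeing that $Q_1$ does not vanish on the open region of interest so that the division preserving the inequality is legitimate.
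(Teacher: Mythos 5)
Your proposal is correct and follows essentially the same route as the paper's proof: both reduce $\abs{R_1(i\,y_1,i\,y_2)} \le 1$ to $E_1 \ge 0$ by cross-multiplying with $\abs{Q_1}^2$ and invoking the conjugation identity $\overline{Q_1(i\,y_1,i\,y_2)} = Q_1(-i\,y_1,-i\,y_2)$ valid for polynomials with real coefficients. The paper compresses this into a short chain of inequalities declared ``equivalent'' without comment, so your explicit justification that $P_1, Q_1$ have real coefficients (via Cramer's rule) and your bookkeeping at points where $Q_1$ vanishes only add rigor to the same underlying argument.
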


\begin{proof}
    Following the single rate approach presented in \cite[Section IV.3]{Hairer_book_II}, we have that
    \begin{align*}
        1 &\ge \abs{R_1 \mleft( i \, y\F, i \, y\S \mright)}^2 \\
        0 &\le \abs{Q_1 \mleft( i \, y\F, i \, y\S \mright)}^2 - \abs{P_1 \mleft( i \, y\F, i \, y\S \mright)}^2 \\
        0 &\le Q_1 \mleft( i \, y\F, i \, y\S \mright) \, \overline{Q_1 \mleft( i \, y\F, i \, y\S \mright)} - P_1 \mleft( i \, y\F, i \, y\S \mright) \, \overline{P_1 \mleft( i \, y\F, i \, y\S \mright)} \\
        0 &\le Q_1 \mleft( i \, y\F, i \, y\S \mright) \, Q_1 \mleft( -i \, y\F, -i \, y\S \mright) - P_1 \mleft( i \, y\F, i \, y\S \mright) \, P_1 \mleft( -i \, y\F, -i \, y\S \mright) \\
        0 &\le E_1 \mleft( y\F, y\S \mright).
    \end{align*}
    Since each of these inequalities is equivalent, the statement is proven.
    \qed
\end{proof}

\subsection{2D Test Problem}

Another test problem, first proposed in \cite{gear1974multirate}, and later used in \cite{kvaerno2000stability,savcenco2008comparison,hundsdorfer2009analysis,constantinescu2013extrapolated}, is the 2D linear test problem
\begin{equation} \label{eq:2d_test}
    \begin{bmatrix}
        y\F \\ y\S
    \end{bmatrix}' =
    \underbrace{
        \begin{bmatrix}
            \lambda\F & \eta\S \\
            \eta\F & \lambda\S
        \end{bmatrix}
    }_{\Lambda}
    \begin{bmatrix}
        y\F \\ y\S
    \end{bmatrix}.
\end{equation}
Here, the exact solution must be bounded.  That is, the eigenvalues of $\Lambda$ have nonpositive real parts and eigenvalues on the imaginary axis are regular.  Further, we enforce that $\lambda\F, \lambda\S \in \C^-$ so the individual partitions have bounded dynamics.  We will denote the set of these special exponentially bounded matrices by $\mathbb{M}$, and this test problem will be referred to as the complex 2D test problem.  Many authors have considered simplifying assumptions including restricting $\Lambda$ to real entries.  In this case, the constraints on $\Lambda$ simplify to $\lambda\F, \lambda\S \leq 0$, $\eta\F \eta\S \leq \lambda\F \lambda\S$, and a zero eigenvalue must be regular.  We will refer to this problem as the real 2D test problem.

When \cref{eq:GARK} is applied to \cref{eq:2d_test}, we arrive at the stability matrix
\begin{align} \label{eq:2d_stability}
    \begin{split}
        & \quad R_2 \mleft(
        \begin{bmatrix}
            z\F & w\S \\
            w\F & z\S
        \end{bmatrix}
        \mright) \\
        &= \eye{2} + \begin{bmatrix}
            \mathbf{b}\F* & 0 \\
            0 & \mathbf{b}\S*
        \end{bmatrix}
        \begin{bmatrix}
            \eye{\mathbf{s}\F} - z\F \, \mathbf{A}\FF & -w\S \, \mathbf{A}\FS \\
            -w\F \, \mathbf{A}\SF & \eye{\mathbf{s}\S} - z\S \, \mathbf{A}\SS
        \end{bmatrix}^{-1} \\
        & \quad
        \begin{bmatrix}
            z\F \, \vecone_{\mathbf{s}\F} & w\S \, \vecone_{\mathbf{s}\F} \\
            w\F \, \vecone_{\mathbf{s}\S} & z\S \, \vecone_{\mathbf{s}\S}
        \end{bmatrix},
    \end{split}
\end{align}
where
\begin{equation*}
    \begin{bmatrix}
        z\F & w\S \\
        w\F & z\S
    \end{bmatrix} =
    H \begin{bmatrix}
        \lambda\F & \eta\S \\
        \eta\F & \lambda\S
    \end{bmatrix}.
\end{equation*}
\begin{definition}[Complex 2D region of absolute stability]
    The set $$S_2 = \left\{ Z \in \C^{2 \times 2} : R_2(Z) \text{ power bounded} \right\}$$ is the complex 2D region of absolute stability for the test problem \cref{eq:2d_test}.  A GARK method is called complex 2D A-stable if $S_2 \supseteq \mathbb{M}$.
\end{definition}

\begin{definition}[Real 2D region of absolute stability]
    The set $$\widehat{S}_2 = \left\{ Z \in \R^{2 \times 2} : R_2(Z) \text{ power bounded} \right\}$$ is the real 2D region of absolute stability for the test problem \cref{eq:2d_test}.  A GARK method is called real 2D A-stable if $\widehat{S}_2 \supseteq (\mathbb{M} \cap \R^{2 \times 2})$.
\end{definition}

For both cases of the 2D test problem, the power boundedness condition makes finding necessary and sufficient conditions for stability significantly more challenging.
Considering test problems on the boundary of $\mathbb{M}$ does provide important necessary conditions. Consider the particular test problem
\begin{equation} \label{eq:2d_imag}
    y' = \begin{bmatrix}
        0 & \eta \\
        -\eta & 0
    \end{bmatrix} y,
\end{equation}
which has purely imaginary eigenvalues for $\eta \in \R$.  
Note that
\begin{equation} \label{eq:2d_stability_imag}
    \begin{split}
        R_2 \mleft(
        \begin{bmatrix}
            0 & w \\
            -w & 0
        \end{bmatrix}
        \mright) &= \eye{2} + w
        \begin{bmatrix}
            \mathbf{b}\F* & 0 \\
            0 & \mathbf{b}\S*
        \end{bmatrix}
        \begin{bmatrix}
            \eye{\mathbf{s}\F} & -w \, \mathbf{A}\FS \\
            w \, \mathbf{A}\SF & \eye{\mathbf{s}\S}
        \end{bmatrix}^{-1}
        \begin{bmatrix}
            0 & \vecone_{\mathbf{s}\F} \\
            -\vecone_{\mathbf{s}\S} & 0
        \end{bmatrix} \\
        &= \begin{bmatrix}
            1 - w^2 \, \mathbf{b}\F* \, \mathbf{A}\FS \, \mathbf{d}\S &
            w \, \mathbf{b}\F* \, \mathbf{d}\F \\
            -w \, \mathbf{b}\S* \, \mathbf{d}\S &
            1 - w^2 \, \mathbf{b}\S* \, \mathbf{A}\SF \, \mathbf{d}\F
        \end{bmatrix},
    \end{split}
\end{equation}
where $w = H \, \eta$ and
\begin{align*}
    \mathbf{d}\F &= \left( \eye{\mathbf{s}\F} + w^2 \, \mathbf{A}\FS \, \mathbf{A}\SF \right)^{-1} \vecone_{\mathbf{s}\F}, \\
    \mathbf{d}\S &= \left( \eye{\mathbf{s}\S} + w^2 \, \mathbf{A}\SF \, \mathbf{A}\FS \right)^{-1} \vecone_{\mathbf{s}\S}.
\end{align*}

An important property of this stability function, which will be used later for \cref{thm:decoupled}, is that it depends on the coupling coefficients but not the base method coefficients $A\FF$ and $A\SS$.

\begin{remark}[Other test problems]
The 2D problem can be generalized to the linear block system
\begin{equation}
    \label{eq:block_stability}
    \begin{bmatrix}
        y\F \\ y\S
    \end{bmatrix}' =
    \begin{bmatrix}
        \Lambda\F & E\S \\
        E\F & \Lambda\S
    \end{bmatrix}
    \begin{bmatrix}
        y\F \\ y\S
    \end{bmatrix}.
\end{equation}
This problem has been considered in~\cite{andrus1993stability,gear1984multirate}.  An even more general block system was used by Skelboe in~\cite{skelboe1989stability}.  We do not consider these block generalizations further as we find that the 2D problem already poses a surprisingly challenging test problem.
\end{remark}

\subsection{Comparison of Stability Test Problems}

When designing an implicit method, unconditional stability is a highly desirable property. A natural question is which test problem should be used to determine stability.  In this section, we explore the relationships among the different stability criteria in order to address this question. Consider, for example, the GARK method given by the tableau below:
\begin{equation*}
    \begin{butchertableau}{c|c}
        1 & 0 \\ \hline
        1 & 1 \\ \hline
        1 & 1
    \end{butchertableau}.
\end{equation*}
This method is scalar L-stable and even algebraically stable \cite{Sandu_2015_GARK}, but
\begin{equation*}
    \rho \mleft( R_2 \mleft(
    \begin{bmatrix}
        -1 & 1 \\
        -10 & -1
    \end{bmatrix}
    \mright) \mright) =
    \frac{\sqrt{5}+3}{4} > 1,
\end{equation*}
with $\rho$ the spectral radius operator.  Thus, it is only conditionally stable for the real and complex 2D test problems.

Conversely, consider the GARK method
\begin{equation*}
    \begin{butchertableau}{cc|cc}
        \frac{1}{4} & 0 & \frac{1}{4} & \frac{1}{4} \\
        \frac{1}{4} & \frac{1}{4} & \frac{1}{4} & \frac{1}{4} \\ \hline
        \frac{1}{4} & \frac{1}{4} & \frac{1}{4} & 0 \\
        \frac{1}{4} & \frac{1}{4} & \frac{1}{4} & \frac{1}{4} \\ \hline
        \frac{1}{2} & \frac{1}{2} & \frac{1}{2} & \frac{1}{2}
    \end{butchertableau}.
\end{equation*}
The base method is only A(\ang{45}) stable, and thus, it is easy to show the GARK method is conditionally stable with respect to the scalar test problem:
\begin{equation}
    \abs{R_1(-4 + 8 i, 0)} = \frac{\sqrt{17}}{4} > 1.
\end{equation}
For the real 2D test problem, this GARK method is A-stable.  This result reveals a shortcoming of the real 2D test problem: the individual partitions have purely real eigenvalues.  Ideally, a test problem should reveal instabilities of the base methods off the real axis.  Despite the apparent independence of the stability functions \cref{eq:scalar_stability,eq:2d_stability}, we do note that
\begin{subequations} \label{eq:test_eqalities}
    \begin{align}
        R_1 \mleft( z\F, z\S \mright) &=
        \begin{bmatrix}
            z\F & z\S
        \end{bmatrix}
        R_2 \mleft( \begin{bmatrix}
            z\F & z\S \\
            z\F & z\S \\
        \end{bmatrix} \mright)
        \bgroup
        \renewcommand{\arraystretch}{1.25}
        \begin{bmatrix}
            \frac{\alpha}{z\F} \\ \frac{1 - \alpha}{z\S}
        \end{bmatrix}
        \egroup, \label{eq:test_eqalities_1} \\
        %
        &=
        \begin{bmatrix}
            1 & 1
        \end{bmatrix}
        R_2 \mleft( \begin{bmatrix}
            z\F & z\F \\
            z\S & z\S \\
        \end{bmatrix} \mright)
        \begin{bmatrix}
            \alpha \\ 1 - \alpha
        \end{bmatrix}, \label{eq:test_eqalities_2}
    \end{align}
\end{subequations}
for any $\alpha\in \C$.

When \cref{eq:2d_test} is taken to have complex entries, however, there is a meaningful connection to the scalar test problem.
\begin{theorem} \label{thm:a-stability}
    If a GARK method is A-stable with respect to the complex 2D test problem, then it is A-stable with respect to the scalar test problem.
\end{theorem}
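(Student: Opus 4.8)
The plan is to read the theorem through the algebraic identity \cref{eq:test_eqalities_2}, which expresses the scalar stability function as a bilinear form in the $2\times 2$ stability matrix evaluated at a rank-one argument. Set
\begin{equation*}
  Z_c \coloneqq \begin{bmatrix} z\F & z\F \\ z\S & z\S \end{bmatrix}, \qquad e \coloneqq \vecone_2,
\end{equation*}
so that \cref{eq:test_eqalities_2} reads $R_1(z\F,z\S) = e^T R_2(Z_c)\,[\alpha,\,1-\alpha]^T$ for every $\alpha \in \C$. The central observation is that this forces $R_1(z\F,z\S)$ to be an \emph{eigenvalue} of $R_2(Z_c)$: evaluating the identity at $\alpha = 1$ and at $\alpha = 0$ shows that both column sums of $R_2(Z_c)$ equal $R_1(z\F,z\S)$, that is,
\begin{equation*}
  e^T R_2(Z_c) = R_1(z\F,z\S)\, e^T .
\end{equation*}
Thus $e$ is a left eigenvector of $R_2(Z_c)$ with eigenvalue $R_1(z\F,z\S)$, and in particular $\abs{R_1(z\F,z\S)} \le \rho\left(R_2(Z_c)\right)$, where $\rho$ is the spectral radius.

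Next I would check that $Z_c$ is an admissible argument for the complex 2D test problem, i.e.\ $Z_c \in \mathbb{M}$. Writing $Z_c = H\,\Lambda$ identifies the partition eigenvalues as $\lambda\F = z\F/H$ and $\lambda\S = z\S/H$; for $z\F, z\S \in \C^-$ these lie in $\C^-$ as required. The eigenvalues of $Z_c$ itself are $0$ and $z\F + z\S$ (the trace is $z\F + z\S$ and the determinant vanishes), both of which have nonpositive real part. When $z\F + z\S \ne 0$ the two eigenvalues are distinct, so $Z_c$ is diagonalizable and the eigenvalue $0$ is regular; hence $Z_c \in \mathbb{M}$. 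If the method is A-stable for the complex 2D test problem then $S_2 \supseteq \mathbb{M}$, so $R_2(Z_c)$ is power bounded, which gives $\rho\left(R_2(Z_c)\right) \le 1$. Combining with the eigenvalue bound yields $\abs{R_1(z\F,z\S)} \le 1$ for all $(z\F,z\S) \in \C^- \times \C^-$ with $z\F + z\S \ne 0$.

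The remaining step, and the main obstacle, is the excluded set $z\F + z\S = 0$. For $z\F, z\S \in \C^-$ this equation forces $\Re(z\F) = \Re(z\S) = 0$, and there $Z_c$ is nilpotent, so its zero eigenvalue is defective and $Z_c \notin \mathbb{M}$; the spectral-radius argument does not apply directly. I would close this gap by continuity: $R_1$ is a rational function, and the bound $\abs{R_1} \le 1$ already holds on the dense subset of $\C^- \times \C^-$ where $z\F + z\S \ne 0$. A pole of $R_1$ on the excluded purely-imaginary diagonal is therefore impossible, since $\abs{R_1}$ would exceed $1$ at nearby admissible points; hence $R_1$ is finite there and the bound passes to the limit, giving $S_1 \supseteq \C^- \times \C^-$, i.e.\ scalar A-stability. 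The one point demanding care in a full write-up is justifying that a several-variable rational function bounded by $1$ on a dense open set cannot have poles accumulating on this boundary diagonal, which is exactly where the continuity argument must be made rigorous.
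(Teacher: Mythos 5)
Your proof takes essentially the same route as the paper's: both exploit the identity \cref{eq:test_eqalities_2} holding for all $\alpha$ to conclude that the column sums of $R_2$ at the rank-one argument agree, so that $R_1 \mleft( z\F, z\S \mright)$ is an eigenvalue of $R_2$, and then invoke power boundedness to get $\abs{R_1 \mleft( z\F, z\S \mright)} \leq 1$. The only difference is that you additionally verify the argument matrix lies in $\mathbb{M}$ and patch the degenerate set $z\F + z\S = 0$ (where that matrix is nilpotent, hence inadmissible) by a continuity/removable-singularity argument --- details the paper's proof passes over silently.
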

\begin{proof}
    First, we define
    \begin{equation}
        R_2 \mleft( \begin{bmatrix}
            z\F & z\F \\
            z\S & z\S \\
        \end{bmatrix} \mright)
        = \begin{bmatrix}
            r_{1,1} & r_{1,2} \\
            r_{2,1} & r_{2,2}
        \end{bmatrix}.
    \end{equation}
    Since \cref{eq:test_eqalities_2} must hold for all $\alpha$,
    \begin{equation*}
        \text{const} =
        \begin{bmatrix} 1 & 1 \end{bmatrix}
        \begin{bmatrix}
            r_{1,1} & r_{1,2} \\
            r_{2,1} & r_{2,2}
        \end{bmatrix}
        \begin{bmatrix}
            \alpha \\ 1 - \alpha
        \end{bmatrix} \\
        = \alpha \, (r_{1,1} + r_{2,1} - r_{1,2} - r_{2,2}) + r_{1,2} + r_{2,2}.
    \end{equation*}
    Thus, $r_{1,1} + r_{2,1} - r_{1,2} - r_{2,2} = 0$ and
    \begin{equation}
        R_2 \mleft( \begin{bmatrix}
            z\F & z\F \\
            z\S & z\S \\
        \end{bmatrix} \mright)
        = \begin{bmatrix}
            r_{1,1} & r_{1,2} \\
            r_{2,1} & r_{1,1} + r_{2,1} - r_{1,2}
        \end{bmatrix}.
    \end{equation}
    
    Due to this structure, $r_{1,1} + r_{2,1}$ is an eigenvalue, and if a GARK method is A-stable for the 2D test problem, then $\abs{r_{1,1} + r_{2,1}} \leq 1$.  Using \cref{eq:test_eqalities_2} with $\alpha=1$, we have that
    \begin{align*}
        R_1 \mleft( z\F, z\S \mright) &=
        \begin{bmatrix}
            1 & 1
        \end{bmatrix}
        R_2 \mleft( \begin{bmatrix}
            z\F & z\F \\
            z\S & z\S \\
        \end{bmatrix} \mright)
        \begin{bmatrix}
            1 \\ 0
        \end{bmatrix} \\
        &= r_{1,1} + r_{2,1} \\
        \abs{R_1 \mleft( z\F, z\S \mright)} &\leq 1.
    \end{align*}
    Thus, the method is A-stable for the scalar test problem.
    \qed
\end{proof}

While the 2D test problem may be a more thorough, reliable, and informative method of assessing stability, it is also more difficult to analyze and visualize due to the high-dimensional space of test problems.  We summarize the hierarchy of linear stability properties in \Cref{fig:stability}.

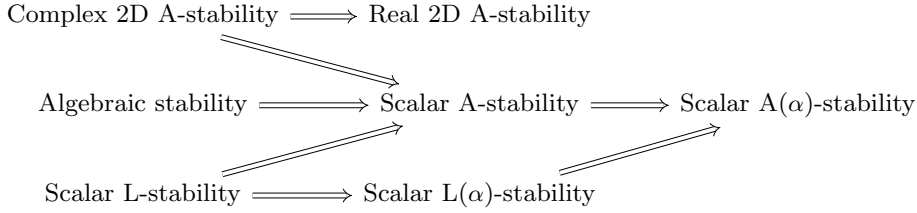
\begin{figure}[ht]
    \centering
    \begin{tikzcd}[arrows=Rightarrow]
       \text{Complex 2D A-stability} \arrow{r}{} \arrow{dr}{} & \text{Real 2D A-stability} \\
       \text{Algebraic stability} \arrow{r}{} & \text{Scalar A-stability} \arrow{r}{} & \text{Scalar A$(\alpha)$-stability} \\
       \text{Scalar L-stability} \arrow{r}{} \arrow{ru}{} & \text{Scalar L$(\alpha)$-stability} \arrow{ru}{}
    \end{tikzcd}
    \caption{Stability implications for the various linear test problems. In general, no implication arrows are reversible.}
    \label{fig:stability}
\end{figure}

\begin{lemma} \label{lem:nilpotent}
    For a decoupled GARK method, the following matrix is nilpotent:
    \begin{equation*}
        \begin{bmatrix}
            0 & \mathbf{A}\FS \\
            \mathbf{A}\SF & 0
        \end{bmatrix}.
    \end{equation*}
\end{lemma}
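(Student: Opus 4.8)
The plan is to exhibit a reordering of the combined stages under which the coupling matrix becomes strictly lower triangular; since a strictly lower triangular matrix is nilpotent and nilpotency is invariant under similarity transforms (in particular under simultaneous row/column permutations), this will suffice. Write $P = \mathbf{A}\FS$, $Q = \mathbf{A}\SF$, and $B = \begin{bmatrix} 0 & P \\ Q & 0 \end{bmatrix}$, a matrix of size $\mathbf{s} = \mathbf{s}\F + \mathbf{s}\S$. I would first record the elementary observation $B^2 = \begin{bmatrix} PQ & 0 \\ 0 & QP \end{bmatrix}$, so that $B$ is nilpotent if and only if $PQ$ (equivalently $QP$) is; but the cleaner route is to work with $B$ directly via a computation-order argument. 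Note also that $B$ has zero diagonal, being block off-diagonal, which is what will eventually give strictness.

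Next, I would translate the definition of a \emph{decoupled} method into a combinatorial statement. Associate to $B$ the directed graph $G$ on the $\mathbf{s}$ combined stages having an edge $i \to j$ precisely when $B_{i,j} \neq 0$, i.e.\ when stage $i$ uses stage $j$ through one of the coupling blocks. Because $P$ and $Q$ are the off-diagonal blocks, every edge of $G$ joins a fast stage to a slow stage, so $G$ is bipartite and in particular has no self-loops. The decoupling hypothesis says exactly that every coupling stage can be evaluated before it is needed, which means $G$ admits no directed cycle: a cycle would force some stage to depend, through coupling, on a value that is not yet available. Hence $G$ is acyclic.

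Finally, I would fix a topological ordering of $G$ — this is the order in which the stages are actually computed — and let $\Pi$ be the corresponding permutation of the $\mathbf{s}$ stages. In this ordering every coupling dependency points backward, so $\Pi^T B \Pi$ has all its nonzero entries below the diagonal, and because the diagonal of $B$ already vanishes it is in fact strictly lower triangular. Therefore $\left( \Pi^T B \Pi \right)^{\mathbf{s}} = 0$, and $B$ is nilpotent.

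The main obstacle is the faithful translation of the operational notion of ``decoupled'' into the acyclicity of $G$, and in particular handling the implicit solves of the base methods. I would argue that the stages participating in a single implicit base-method solve all lie in the same partition, so their pairwise entries of $B$ vanish; such stages may therefore be ordered arbitrarily among themselves, and the only constraints the ordering must respect are the genuine cross-partition couplings, which the decoupling assumption guarantees are acyclic.
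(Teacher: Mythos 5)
Your proposal is correct and takes essentially the same approach as the paper: both arguments translate the decoupled property into acyclicity of the directed dependency graph attached to the coupling coefficients, and then pass from acyclicity to nilpotency of the adjacency matrix. Your write-up simply makes explicit, via the topological ordering and the permutation to strictly triangular form, the equivalence that the paper's proof invokes in one line.
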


\begin{proof}
    The full matrix $\mathbf{A}$ can be viewed as the adjacency matrix of a weighted directed graph.  Cycles indicate the method is implicit, and by the definition of a decoupled method, implicitness only comes from the base methods.  With the base method coefficients set to zero, the directed graph becomes acyclic: a property equivalent to nilpotency of the adjacency matrix.
    \qed
\end{proof}




\begin{theorem} \label{thm:decoupled}
    A decoupled GARK method consistent with \cref{eq:ode} (first order accurate) cannot be A-stable for the real 2D test problem.
\end{theorem}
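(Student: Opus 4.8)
The plan is to find a single real matrix in $\mathbb{M} \cap \R^{2\times 2}$ at which $R_2$ fails to be power bounded, thereby ruling out real 2D A-stability. I would use the purely rotational problem \cref{eq:2d_imag}: its matrix has the simple (hence regular) imaginary eigenvalues $\pm i\eta$ and vanishing diagonal $\lambda\F = \lambda\S = 0 \in \C^-$, so it lies in $\mathbb{M} \cap \R^{2\times 2}$ for every $\eta \in \R$. Consequently, A-stability would force the matrix $R_2$ computed in \cref{eq:2d_stability_imag} to be power bounded for all $w = H\,\eta \in \R$, and I will derive a contradiction from this.

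The decisive structural input is \cref{lem:nilpotent}. Since the method is decoupled, the block matrix with off-diagonal blocks $\mathbf{A}\FS, \mathbf{A}\SF$ is nilpotent, and so is its square, whose diagonal blocks are $\mathbf{A}\FS\mathbf{A}\SF$ and $\mathbf{A}\SF\mathbf{A}\FS$; hence both products are nilpotent. Therefore the Neumann expansions of $(\eye{\mathbf{s}\F} + w^2\,\mathbf{A}\FS\mathbf{A}\SF)^{-1}$ and $(\eye{\mathbf{s}\S} + w^2\,\mathbf{A}\SF\mathbf{A}\FS)^{-1}$ terminate, so $\mathbf{d}\F$ and $\mathbf{d}\S$ are polynomials in $w^2$ and every entry of $R_2$ in \cref{eq:2d_stability_imag} is a polynomial in $w$. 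This is exactly the promised role of the observation that \cref{eq:2d_stability_imag} involves only the coupling coefficients, and it is the step I expect to require the most care: one must confirm that decoupling turns $R_2$ from a rational function into an honest matrix polynomial.

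Granting this, power boundedness requires $\rho(R_2(w)) \le 1$, hence $\lvert \operatorname{tr} R_2(w) \rvert \le 2$ for all real $w$. But $\operatorname{tr} R_2$ is a real polynomial in $w$, so being bounded on all of $\R$ it must be constant, equal to its value $2$ at $w=0$, where $R_2 = \eye{2}$. The two eigenvalues of $R_2(w)$ thus sum to $2$ while each has modulus at most $1$, which pins them to $\mu_1 = \mu_2 = 1$ for every $w$. Finally I would invoke consistency: first-order accuracy gives $\mathbf{b}\F*\,\vecone_{\mathbf{s}\F} = 1$, so the $(1,2)$ entry $w\,\mathbf{b}\F*\,\mathbf{d}\F = w + \mathcal{O}(w^3)$ is a nonzero polynomial, whence $R_2(w) \ne \eye{2}$ for all but finitely many $w$. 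A $2 \times 2$ matrix with a double eigenvalue $1$ that is not the identity is a nontrivial Jordan block and is therefore not power bounded, contradicting A-stability. The remaining delicate point is precisely this last one, namely that power boundedness at spectral radius $1$ demands semisimplicity, which the nonvanishing off-diagonal entry forbids.
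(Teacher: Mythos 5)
Your proof is correct, and it coincides with the paper's own argument for most of its length: the same test problem \cref{eq:2d_imag}, the same appeal to \cref{lem:nilpotent} to terminate the Neumann series so that every entry of $R_2(w)$ in \cref{eq:2d_stability_imag} is a polynomial in $w$, and the same trace observation (the paper phrases it as forcing $p_{2,2} = -p_{1,1}$, which is exactly your statement $\tr(R_2(w)) \equiv 2$). Where you genuinely diverge is the final contradiction. The paper computes $\det(R_2(w)) = 1 + w^2 + \order{w^4}$, a nonconstant and hence unbounded polynomial, so the spectral radius itself becomes arbitrarily large; it only ever uses the weak necessary condition $\rho(R_2(w)) \le 1$. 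You instead push the trace argument further: $\tr(R_2(w)) \equiv 2$ together with $\rho \le 1$ pins both eigenvalues to $1$, and then you invoke the full strength of power boundedness — since consistency makes the $(1,2)$ entry $w\,\mathbf{b}\F*\,\mathbf{d}\F = w + \order{w^3}$ a nonzero polynomial, $R_2(w) \ne \eye{2}$ for all but finitely many $w$, and a non-identity $2 \times 2$ matrix with double eigenvalue $1$ is defective, with linearly growing powers. Both finishes are sound. The paper's is slightly more robust in that it exhibits genuine spectral blow-up, so it would still apply under a weaker stability notion based only on the spectral radius; yours is sharper in a different sense, showing the decoupled structure fails even at the margin $\rho = 1$, and the delicate point you flagged (power boundedness at spectral radius one requires semisimplicity) is precisely the step your Jordan-block argument correctly supplies.
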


\begin{proof}
    Consider the particular test problem given in \cref{eq:2d_imag}.  Note that in \cref{eq:2d_stability_imag}, the matrix being inverted is the sum of an identity matrix and a nilpotent matrix by the decoupled assumption and \cref{lem:nilpotent}.  Expanding the inverse in a Neumann series reveals $\mathbf{d}\F$ and $\mathbf{d}\S$ must be even polynomials in $w$ of finite degree.  Moreover, the off-diagonal terms of the stability matrix satisfy
    \begin{alignat*}{2}
        w \, \mathbf{b}\F* \, \mathbf{d}\F
        &= w \, \mathbf{b}\F* \left( \eye + w^2 \, \mathbf{A}\FS \, \mathbf{A}\SF + \ldots \right) \vecone_{\mathbf{s}\F}
        &&= w + w^3 \, p_{1,2}(w^2), \\
        -w \, \mathbf{b}\S* \, \mathbf{d}\S
        &= -w \, \mathbf{b}\S* \left( \eye + w^2 \, \mathbf{A}\SF \, \mathbf{A}\FS + \ldots \right) \vecone_{\mathbf{s}\S}
        &&= -w - w^3 \, p_{2,1}(w^2),
    \end{alignat*}
    where $p_{1,2}$ and $p_{2,1}$ are polynomials.  Note the consistency assumption implies $\mathbf{b}\F* \vecone_{\mathbf{s}\F} = \mathbf{b}\S* \vecone_{\mathbf{s}\S} = 1$ and is used to determine the coefficient multiplying the $w$ terms.  Now the stability matrix can be written in the form
    \begin{equation*}
        R_2(w) = \begin{bmatrix}
            1 - w^2 \, p_{1,1}(w^2) & w + w^3 \, p_{1,2}(w^2) \\
            -w - w^3 \, p_{2,1}(w^2) & 1 - w^2 \, p_{2,2}(w^2)
        \end{bmatrix},
    \end{equation*}
    where $p_{1,1}$, and $p_{2,2}$ are also polynomials.
    
    Suppose by means of contradiction that the method is A-stable.  Consider the trace of the stability matrix:
    \begin{equation*}
        \tr(R_2(w)) = 2 - w^2 \, ( p_{1,1}(w^2) + p_{2,2}(w^2) ).
    \end{equation*}
    In order to avoid an eigenvalue of $R_2(w)$ being unbounded in $w$, we must have that $p_{2,2}(w^2) = -p_{1,1}(w^2)$.  Using this necessary condition, the determinant is
    \begin{align*}
        \det(R_2(w)) &= (1 - w^2 \, p_{1,1}(w^2)) (1 + w^2 \, p_{1,1}(w^2)) \\
        &\quad - (w + w^3 \, p_{1,2}(w^2)) (-w - w^3 \, p_{2,1}(w^2)) \\
        &= 1 + w^2 + \order{w^4}.
    \end{align*}
    Since the determinant grows unbounded in $w$, the spectral radius can be made arbitrarily large.  This is a contradiction.  Therefore, the method cannot be A-stable for the real 2D test problem.
    \qed
\end{proof}

\subsection{\cmrgark*{} Scalar Stability}

Directly using the general stability formula \cref{eq:scalar_stability} on an MrGARK method requires inverting a matrix of size $\mathbf{s} \times \mathbf{s}$.  When trying to analyze or visualize the linear stability for large $M$, this becomes very expensive.  Fortunately, the particular structure of \cmrgark{} MrGARK methods allows for an explicit derivation of the scalar stability function using only matrices of size $s \times s$.

For the base method $(A, b, c)$, let $\Rint(z)$ be the internal stability function:
\begin{equation*}
    \Rint(z) = \left( \eye{s} - z \, A \right)^{-1} \vecone_s.
\end{equation*}
We now seek to find the scalar internal stability of a \cmrgark{} MrGARK method.  Let $z = z\F + z\S$.  Then the first macro-step \cref{eq:MrGARK_CS:macro-step} is composed of traditional Runge--Kutta stages and is simply
\begin{equation} \label{eq:MrGARK_CS_stability_macro}
    Y = y_n \, \Rint(z)
\end{equation}
for the scalar linear test problem.  The $\lambda$-th fast micro-step \cref{eq:MrGARK_CS:micro-step} has stages defined by the recurrence relation
\begin{equation*}
    Y\FL = y_n \, \vecone_s + \frac{z\F}{M} \sum_{k=1}^{\lambda-1} \vecone_s \, b^T \, Y\FL[k] + \frac{z\F}{M} \, A \, Y\FL + z\S \, A\FSL \, Y.
\end{equation*}
Solving for $Y\FL$ explicitly is equivalent to solving the following linear system via block forward substitution:
\begin{equation*}
    \begin{bmatrix}
        I - \frac{z\F}{M} \, A & \ldots & 0 \\
        \vdots & \ddots & \vdots \\
        -\frac{z\F}{M} \, \vecone_s \, b^T & \ldots & I - \frac{z\F}{M} \, A
    \end{bmatrix}
    \begin{bmatrix}
        Y\FL[1] \\ \vdots \\ Y\FL[M]
    \end{bmatrix}
    = \begin{bmatrix}
        y_n \vecone_{s} + z\S \, A\FSL[1] \, Y \\
        \vdots \\
        y_n \vecone_{s} + z\S \, A\FSL[M] \, Y
    \end{bmatrix}.
\end{equation*}
This yields
\begin{equation} \label{eq:MrGARK_CS_stability_micro}
    \begin{split}
        & \quad Y\FL \\
        &= \frac{z\F}{M} \, \Rint \mleft( \frac{z\F}{M} \mright) \, b^T \left( I - \frac{z\F}{M} A \right)^{-1} \, \sum_{k=1}^{\lambda-1} R \mleft( \frac{z\F}{M} \mright)^{\lambda-1-k} \left( y_n \, \vecone_s \right. \\
        & \quad \left. + z\S \, A\FSL[k] \, Y \right) + \left( I - \frac{z\F}{M} \, A \right)^{-1} \left( y_n \, \vecone_s + z\S \, A\FSL \, Y \right) \\
        &= \left( \frac{z\F \, z\S}{M} \, \Rint \mleft( \frac{z\F}{M} \mright) \, b^T \left( I - \frac{z\F}{M} \, A \right)^{-1} \, \sum_{k=1}^{\lambda-1} R \mleft( \frac{z\F}{M} \mright)^{\lambda-1-k} \, A\FSL[k] \, \Rint(z) \right. \\
        & \quad \left. + z\S \left( I - \frac{z\F}{M} \, A \right)^{-1} \, A\FSL \, \Rint(z) + R \mleft( \frac{z\F}{M} \mright)^{\lambda-1} \, \Rint \mleft( \frac{z\F}{M} \mright) \right) y_n.
    \end{split}
\end{equation}
Together, \cref{eq:MrGARK_CS_stability_macro,eq:MrGARK_CS_stability_micro} form the internal stability for a \cmrgark{} MrGARK method.  With this in hand, the scalar linear stability function \cref{eq:scalar_stability} can be derived:
\begin{equation*}
    \begin{split}
        & \quad R_1 \mleft( z\F, z\S \mright) \\
        &= 1 + \frac{z\F}{M} \, \sum_{\lambda=1}^M \, b^T \, Y\FSL + z\S \, b^T \, Y \\
        &= R \mleft( \frac{z\F}{M} \mright)^M  + z\S b^T  \Rint \mleft( z \mright)\\
        & \quad + \frac{z\S \, z\F}{M} \, b^T \left( \eye{s} - \frac{z\F}{M} \, A \right)^{-1} \, \sum_{\lambda = 1}^M R \mleft( \frac{z\F}{M} \mright)^{M - \lambda} \, A\FSL \, \Rint \mleft( z \mright).
    \end{split}
\end{equation*}

If $A$ is invertible, then $\Rint(-\infty) = 0_s$ and
\begin{align*}
    \lim_{z\F \to -\infty} R_1 \mleft( z\F, z\S \mright) &= R(-\infty)^M  + z\S \left( b^T - b^T \, A^{-1} \,  A\FSL[M] \right) \Rint \mleft( -\infty \mright) \\
    &= R(-\infty)^M.
\end{align*}

The other limit is more difficult to approach directly, so we consider first the internal stability \cref{eq:MrGARK_CS_stability_micro}.  Starting with with the first micro-step, we have that
\begin{equation*}
    \lim_{z\S \to -\infty} Y\FL[1] = \left( I - \frac{z\F}{M} A \right)^{-1} \left( \vecone_s - A\FSL[1] A^{-1} \vecone_s \right) y_n.
\end{equation*}
This suggests the condition $A\FSL[1] \, A^{-1} \, \vecone_s = \vecone_s$ to ensure the stage values go to zero in the limit.  Now we can use an inductive argument to generalize this condition for the remaining micro-step stages.  Assume that $\lim_{z\S \to -\infty} Y\FL[\ell] = 0$ for $\ell = 1, \dots, \lambda - 1$.  Then
\begin{equation*}
    \lim_{z\S \to -\infty} Y\FL[\lambda] = \left( \eye{s} - \frac{z\F}{M} \, A \right)^{-1} \left( \vecone_s - A\FSL[\lambda] \, A^{-1} \, \vecone_s \right) \, y_n.
\end{equation*}
This suggests the condition
\begin{equation} \label{eq:CS_zs_limit_condition}
    A\FSL \, A^{-1} \, \vecone_s = \vecone_s, \qquad \lambda = 1, \dots, M
\end{equation}
to ensure all stages go to zero in the limit.  Further \cref{eq:CS_zs_limit_condition} leads to the result
\begin{equation*}
    \lim_{z\S \to -\infty} R_1 \mleft( z\F, z\S \mright) = R(-\infty) + \frac{z\F}{M} \, \sum_{\lambda=1}^M \, b^T \, Y\FSL = R(-\infty).
\end{equation*}

\section{Numerical Solution of Implicit Stage Equations}
\label{sec:newton}

The key to an efficient implicit GARK method is an efficient Newton iteration.  Written compactly, the stage equations are
\begin{equation} \label{eq:GARK_compact}
    \widehat{Y} = \vecone_{\mathbf{s}} \otimes y_n + H \, (\mathbf{A} \otimes \eye{d}) \, \widehat{f}\left( \widehat{Y} \right),
\end{equation}
where
\begin{equation}
    \widehat{Y} = \begin{bmatrix}
        Y\F \\ Y\S
    \end{bmatrix}, \qquad
    \widehat{f}\left( \widehat{Y} \right) = \begin{bmatrix}
        f\F(Y\F) \\ f\S(Y\S)
    \end{bmatrix}.
\end{equation}
Applying Newton's method to solve for the stages yields the iterative procedure
\begin{subequations} \label{eq:newton}
    \begin{align}
        \left( \eye{\mathbf{s}} \otimes \eye{d} - H \, (\mathbf{A} \otimes \eye{d}) \, \widehat{J} \right) \, \delta
        &= -\widehat{Y} + \vecone_{\mathbf{s}} \otimes y_n + H \, (\mathbf{A} \otimes \eye{d}) \widehat{f}\left( \widehat{Y} \right), \label{eq:newton:solve} \\
        \widehat{Y} &= \widehat{Y} + \delta, \label{eq:newton:update}
    \end{align}
\end{subequations}
with
\begin{equation}
    \widehat{J} = \diag{\left( J\F_1, \ldots, J\F_{s\F}, J\S_1, \ldots, J\S_{s\S} \right)},
\end{equation}
and $J\comp{\sigma}_i = \pdv{f\comp{\sigma}}{y} \mleft( Y\comp{\sigma}_i \mright)$ for $\sigma \in \{\slow, \fast\}$.

In single rate Newton iterations, it is common to evaluate the Jacobian once at $y_n$ and use it across all stages which yields a cheaper modified Newton's method.  A similar strategy can be employed for each partition's Jacobian in a GARK Newton iteration.  For multirate methods, it might be beneficial to reevaluate the fast Jacobian at each micro-step and keep the slow Jacobian across the entire macro-step.

We note that \cref{eq:newton} serves mostly theoretical purposes, as it is impractically expensive and rarely necessary to simultaneously solve for all $\mathbf{s}$ stages.  All methods presented in \cref{sec:methods}, for example, require solving nonlinear systems with dimension no larger than $d$.  In this section, we will explore techniques and method structures that allow for these efficient implementations of Newton iterations.  In the cost analyses we present, matrix decompositions involving the Jacobians are assumed to be the dominant cost of a step.


\subsection{Decoupled Methods}
\label{sec:newton:decoupled}

As described in \cref{sec:MrGARK:standard}, decoupled methods only have implicitness in the base methods.  For this subsection, we will assume both base methods are diagonally implicit which seems to be the most practical structure for decoupled implicit methods.  Now, each of the $\mathbf{s}$ method stages defines a $d$-dimensional nonlinear equation which can be solved sequentially for a cost of $\order{\mathbf{s} \, d^3}$, assuming direct methods are used.  If we further assume the slow matrix decomposition is reused across a multirate macro-step and the fast matrix decomposition is reused across a micro-step, the cost is reduced to $\order{M \, d^3}$.  It is important to note that the slow and fast Jacobians are likely to have simpler structures than the full Jacobian, and these structures can be exploited in the linear solves.

For the special case of component partitioned systems \cref{eq:ode_comp}, the linear solves are of the reduced dimensions $d\F$ and $d\S$.  In the most extreme case where each variable of a system forms a partition, a step would involve scalar Newton iterations for all variables and only the diagonal of the Jacobian of $f$ would be required.  We note, however, that this leads to an explosion in the number of coupling error terms and degraded stability.

\subsection{\cmrgark*{} Methods}
\label{sec:newton:CS}

\cmrgark*{} methods start by taking a full macro-step like a single rate Runge--Kutta method.  Consequently, the nonlinear equations for the stages can be solved just as they would for a single rate method.  When using Newton's method, the full, unpartitioned Jacobian is used.  It may be appropriate to loosen the solver tolerances of the fast variables for the compound step as they will be recomputed later \cite{verhoeven2006error}.  Although the remaining micro-steps are also implicitly defined, only $J\F_i$ is now involved in Newton iterations.  Assuming a diagonally implicit structure for the base method, these Newton iterations are of the form
\begin{align*}
    \left( \eye{d} - h \, a\FF_{i,i} \, J\F_i \right) \delta &= -Y\FL_i + \widetilde{y}_{n + (\lambda - 1)/M} + h \sum_{j = 1}^{s\F} a\FF_{i,j} \, f\F \mleft( Y_j\FL \mright) \\
    & \quad + H \, \sum_{j = 1}^{s\S} a\FSL_{i,j} \, f\S \mleft( Y_j \mright).
\end{align*}
We note that an accurate stage value predictor to start the Newton iterations can come from dense output of the compound step.

In an implementation where a decomposition of the full matrix is formed once and a decomposition for the fast matrix is formed at each micro-step, the total cost for one step is $\order{M \, d^3}$.  For component partitioned systems, this reduces to $\order{d^3 + M \, d\F[\times 3]}$.

\subsection{Stage Reducibility}
\label{sec:newton:stage-reduce}

Consider the simple methods defined by the GARK tableaus \cref{eq:tableau} below:
\begin{equation*}
    \begin{butchertableau}{c|c}
        1 & 1 \\ \hline
        1 & 1 \\ \hline
        1 & 1
    \end{butchertableau}
    \quad \text{and} \quad
    \begin{butchertableau}{c|c}
        \frac{1}{2} & 1 \\ \hline
        \frac{1}{2} & 1 \\ \hline
        1 & 1
    \end{butchertableau}.
\end{equation*}
The former is backward Euler cast into the GARK framework.  A direct application of \cref{eq:newton} would require solving linear systems of size $2 d$ when clearly solves of size $d$ can suffice.  Here, $Y\F_1 = Y\S_1$, and these stages fall back onto the traditional backward Euler stage $Y_1 = y_n + H f(Y_1)$.  The latter method, which is an additive Runge--Kutta (ARK) method cast into the GARK framework, also has $Y\F_1 = Y\S_1$.  \Cref{eq:newton:solve} can be simplified to
\begin{equation} \label{eq:ark_newton}
    \left( \eye{d} - \frac{H}{2} \, J\F_1 + H \, J\S_1 \right) \delta = -Y_1 + y_n + \frac{H}{2} \, f\F(Y_1) + H \, f\S(Y_1).
\end{equation}
More generally when a row of GARK coefficients is repeated in multiple partitions, the number of unknowns in \cref{eq:GARK_compact} and the dimension of the Newton iteration is reduced.  We call this \textit{stage reducibility}.  \cmrgark*{} methods, for example, have this property in the first $s$ stages.

In \Cref{sec:methods}, we develop new multirate coupling strategies that utilize this simplification.  An interesting property is that the solves involve matrices of the form $\eye{d} - h \, \gamma J\F_i - H \, \gamma J\S_i$.  Note $J\F_i$ is scaled by the micro-step, while $J\S_i$ is scaled by the macro-step.  If the multirate ratio is based on partition stiffness, then the scaled matrices should have similar spectral radii.  By damping the fast, stiff modes, the conditioning of this system can be much better than the traditional $\eye{d} - H \, \gamma J_i$.

\subsection{Low Rank Structure of Matrices in Newton Iteration}

When a GARK method has stage reducibility, $\mathbf{A}$ cannot be full rank due to at least one repeated row.  An alternative simplification arises by applying the Woodbury matrix identity to reduce the dimension of the linear solve.  Using the GARK method below, we demonstrate that this idea can be extended to a broader set of schemes:
\begin{equation*}
    \begin{butchertableau}{c|c}
        \frac{1}{2} & \frac{1}{2} \\ \hline
        1 & 1 \\ \hline
        1 & 1
    \end{butchertableau}.
\end{equation*}
We have the following simplification in the Newton iteration:
\begin{align*}
    & \quad \left( \eye{\mathbf{s}} \otimes \eye{d} - H \, (\mathbf{A} \otimes \eye{d}) \, \widehat{J} \right)^{-1} \\
    &= \left( \begin{bmatrix} \eye{d} & 0 \\ 0 & \eye{d} \end{bmatrix} - H \begin{bmatrix} \frac{1}{2} \, J\F_1 & \frac{1}{2} \, J\S_1 \\ J\F_1 & J\S_1 \end{bmatrix} \right)^{-1} \\
    &= \begin{bmatrix} \eye{d} & 0 \\ 0 & \eye{d} \end{bmatrix} + \begin{bmatrix} \frac{H}{2} \, \eye{d} \\ H \, \eye{d} \end{bmatrix} \left( \eye{d} - \frac{H}{2} \, J\F_1 - H \, J\S_1 \right)^{-1} \begin{bmatrix} J\F_1 & J\S_1 \end{bmatrix}.
\end{align*}
Compared to \cref{eq:ark_newton}, additional matrix-vector products are required, but ultimately, the same matrix inverse appears.  Thus, the potential to have improved conditioning is still present.

\section{Practical implicit MrGARK methods}
\label{sec:methods}

In this section, we present new implicit MrGARK methods of orders one to four.  All methods are telescopic and based on single singly diagonally implicit Runge--Kutta (SDIRK) methods.  At high order, coupling coefficients can become complicated rational functions of $\lambda$ and $M$.  In addition to listing the coefficients in this paper, a Mathematica notebook with the coefficients is provided in the supplementary materials to aid those implementing the methods.

\subsection{First order}

Multirate methods of order one have no coupling conditions which allows a great amount of freedom in deriving coefficients, but for implicit methods, stability does impose some important constraints.  \Cref{thm:first_order} eliminates one subset of first order methods from being scalar A-stable.

\begin{theorem} \label{thm:first_order}
  An internally consistent MrGARK method of order exactly one is only scalar A-stable for a finite number of multirate ratios.
\end{theorem}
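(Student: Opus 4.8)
The plan is to reduce scalar A-stability to a local condition at the origin on the stability function $R_1$ and to show that, for a method of order exactly one, this local condition can be met for at most one multirate ratio. First I would expand \cref{eq:scalar_stability} about $(z\F, z\S) = (0,0)$ using the Neumann series $(\eye{\mathbf{s}} - \mathbf{A}\,Z)^{-1} = \sum_{k \ge 0}(\mathbf{A}\,Z)^k$. Consistency, $\mathbf{b}\F* \, \vecone = \mathbf{b}\S* \, \vecone = 1$, gives the linear part $z\F + z\S$, while the quadratic part is $\alpha\,(z\F)^2 + \beta\,z\F z\S + \gamma\,(z\S)^2$ with $\alpha = \mathbf{b}\F* \, \mathbf{A}\FF \, \vecone = \mathbf{b}\F* \, \mathbf{c}\F$, $\gamma = \mathbf{b}\S* \, \mathbf{A}\SS \, \vecone = \mathbf{b}\S* \, \mathbf{c}\S$, and $\beta = \mathbf{b}\F* \, \mathbf{A}\FS \, \vecone + \mathbf{b}\S* \, \mathbf{A}\SF \, \vecone$.

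The key structural step uses internal consistency to replace $\mathbf{A}\FS \, \vecone$ and $\mathbf{A}\SF \, \vecone$ by $\mathbf{c}\F$ and $\mathbf{c}\S$, which yields the identity $\beta = \alpha + \gamma$. Evaluating on the imaginary axes as permitted by \Cref{thm:R1_imag}, a short computation (in which the odd-degree terms cancel) gives $|R_1(i\,y\F, i\,y\S)|^2 = 1 + (1 - 2\alpha)(y\F)^2 + (2 - 2\beta)\,y\F y\S + (1 - 2\gamma)(y\S)^2 + \cdots$, where the omitted terms have total degree at least four. A-stability forces this to be $\le 1$, so letting $(y\F, y\S) \to 0$ the leading quadratic form must be negative semidefinite. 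Setting $A := 1 - 2\alpha$ and $C := 1 - 2\gamma$, the identity $\beta = \alpha + \gamma$ makes the mixed coefficient equal to $A + C$; negative semidefiniteness then requires $(A + C)^2 \le 4\,A\,C$, i.e.\ $(A - C)^2 \le 0$. Hence A-stability necessitates $\alpha = \gamma$.

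I would then compute the $M$-dependence of $\alpha$ and $\gamma$ for the standard MrGARK tableau \cref{eq:MrGARK_tableau}. The slow block contributes $\gamma = b\S* \, c\S$, a constant independent of $M$, while summing the contributions of the $M$ micro-steps in the fast block gives $\alpha = \tfrac{1}{2} + \tfrac{2\,b\F* \, c\F - 1}{2M}$, which tends to $\tfrac12$ as $M \to \infty$. Because the method has order exactly one, at least one of the second-order conditions $b\F* \, c\F = \tfrac12$ or $b\S* \, c\S = \tfrac12$ fails. If $\gamma \ne \tfrac12$, then $\alpha = \gamma$ is a single linear equation in $1/M$ with at most one solution, $M = (2\,b\F* \, c\F - 1)/(2\gamma - 1)$; if $\gamma = \tfrac12$, then necessarily $b\F* \, c\F \ne \tfrac12$, so $\alpha \ne \tfrac12 = \gamma$ for every $M$. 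In either case the necessary condition $\alpha = \gamma$ holds for at most one positive integer $M$, so the method is scalar A-stable for only finitely many multirate ratios.

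The main obstacle is establishing the necessary condition $\alpha = \gamma$ cleanly. Everything hinges on recognizing that internal consistency forces the off-diagonal quadratic coefficient to be exactly $\alpha + \gamma$, which is precisely what collapses the negative-semidefiniteness discriminant to the perfect square $(A - C)^2$; without that identity the quadratic form imposes only an inequality relating $\alpha$, $\beta$, $\gamma$ and the argument fails. Once the identity is in hand, the remaining steps — the $1/M$ expansion of $\alpha$ and the case split on whether $\gamma = \tfrac12$ — are routine.
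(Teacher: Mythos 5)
Your proposal is correct and takes essentially the same route as the paper's proof: both expand $\abs{R_1}^2$ along the imaginary axes near the origin, use internal consistency to force the cross coefficient of the resulting quadratic form to equal the sum of its diagonal coefficients, observe that A-stability then requires the discriminant condition, which collapses to the perfect square $(r\F - r\S)^2 \le 0$ (the paper states this as the Hessian determinant $-4(r\F - r\S)^2$ and a saddle-shape argument), and finally exploit the $1/M$-decay of the fast second-order residual against the fixed slow residual. Your explicit formula $\alpha = \tfrac12 + \tfrac{2\,b\F*\,c\F-1}{2M}$ and the case split on $\gamma = \tfrac12$ are just a more detailed rendering of the paper's case analysis on which base method has order one.
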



\begin{proof}
    Using the internal consistency assumptions, the magnitude of the scalar stability function can be expanded as
    \begin{equation} \label{eq:stability_magnitude_expansion}
        \begin{split}
            \abs{R_1(i \, \omega\F \, y, i \, \omega\S \, y)}^2 
            &= 1 + y^2  \left( \omega\F + \omega\S \right)^2  \\
            & \quad - 2 y^2 \left( \left( \omega\F + \omega\S \right) \left( \omega\F \, \mathbf{b}\F* \, \mathbf{c}\F + \omega\S \, \mathbf{b}\S* \, \mathbf{c}\S \right) \right)\\
            & \quad + \order{y^4} \\
            &= 1 + y^2 \, p \mleft( \omega\F, \omega\S \mright) + \order{y^4}.
        \end{split}
    \end{equation}

    Let $H$ be the Hessian matrix of the homogeneous polynomial of degree two $p$.  Note that $\det(H) = -4 \left( r\F - r\S \right)^2$, where $r\F = \mathbf{b}\F* \, \mathbf{c}\F - \frac{1}{2}$ and $r\S = \mathbf{b}\S* \, \mathbf{c}\S - \frac{1}{2}$ which are the second order residuals.  These residuals cannot both be zero because the GARK method would be order two by internal consistency.  When one base method is order one and the other is higher order, these residuals must differ.  Otherwise, when both base methods have order one, $r\F$ is a function of $M$ which approaches zero while $r\S$ is a fixed nonzero constant.  For all but a finite set of $M$, these residuals must differ.  Whenever the residuals differ, $p$ is saddle-shaped, and there exist $\omega\F$ and $\omega\S$ such that the polynomial is positive.  For sufficiently small values of $y$, the positive $y^2 \, p \mleft( \omega\F, \omega\S \mright)$ term will dominate the $\order{y^4}$ term in \cref{eq:stability_magnitude_expansion}.  Thus, for all but a finite set of $M$, there are $\omega\F$, $\omega\S$, and $y$ such that $\abs{R_1(i \, \omega\F \, y, i \, \omega\S \, y)} > 1$.
    \qed
\end{proof}

\begin{remark}
    Note that \cref{thm:first_order} imposes no restriction on the multirate strategy. It only requires the defining characteristic of a multirate method: the fast error asymptotically approaches zero as $M$ increases.
\end{remark}

At first order, the natural choice for an implicit base method is backward Euler.  There is currently a plethora of multirate backward Euler schemes in the literature (see \cite{sand1992stability,verhoeven2006general,zhao2016asynchronous,hachtel2019multirate}).  These schemes feature nearly all the different combinations of coupled or decoupled, internal consistency or internal inconsistency, and parallel or sequential methods.  In the search for a multirate backward Euler method with excellent stability and accuracy properties, we developed the coupling strategy given by the following standard MrGARK coupling coefficients:
\begin{equation} \label{eq:be_coupled}
    A\FSL = \begin{bmatrix} \begin{cases}
        0 & \lambda < \frac{M}{2} \\
        1 & \text{otherwise}
    \end{cases} \end{bmatrix},
    \qquad
    A\SFL = \begin{bmatrix} \begin{cases}
        1 & \lambda \leq \frac{M + 1}{2} \\
        0 & \text{otherwise}
    \end{cases} \end{bmatrix}.
\end{equation}
This method has one coupled stage, but with stage reducibility (\cref{sec:newton:stage-reduce}), and all other stages are decoupled.  Further, it is internally inconsistent and is scalar L- and algebraically stable for all $M$.  A decoupled counterpart is given by the following coupling coefficients:
\begin{equation} \label{eq:be_decoupled}
    A\FSL = \begin{bmatrix} \begin{cases}
        0 & \lambda \leq \frac{M}{2} \\
        1 & \text{otherwise}
    \end{cases} \end{bmatrix},
    \qquad
    A\SFL = \begin{bmatrix} \begin{cases}
        1 & \lambda \leq \frac{M}{2} \\
        0 & \text{otherwise}
    \end{cases} \end{bmatrix}.
\end{equation}
This method is internally inconsistent, has no second order coupling error when $M$ is even, and is scalar L- and algebraically stable for all $M$.

We note this method is closely connected to the following subcycled Strang splitting \cite{strang1968construction}:
\begin{equation*}
    \varphi_H^f = \left( \varphi_h^{f\F} \right)^{M/2} \circ \varphi_H^{f\S} \circ \left( \varphi_h^{f\F} \right)^{M/2} + \order{H^2}.
\end{equation*}
Here, the operator $\varphi_t^g$ maps an initial condition for the ODE $y' = g(y)$ to the solution at time $t$.  If we approximate these exact ODE solutions with one step of the backward Euler method, we recover the decoupled multirate backward Euler scheme \cref{eq:be_decoupled}. 

\subsection{Second order}

The simplest second order base method is the one stage implicit midpoint method:
\begin{equation*}
    \begin{butchertableau}{c|c}
        \frac{1}{2} & \frac{1}{2} \\ \hline
        & 1
    \end{butchertableau}.
\end{equation*}
The standard MrGARK coupling coefficients
\begin{equation*}
    A\FSL = \begin{bmatrix}
        0 & \lambda < L \\
        \frac{1}{2} & \lambda = L \\
        1 & \lambda > L
    \end{bmatrix}, 
    \qquad
    A\SFL = \begin{bmatrix}
        1 & \lambda < L \\
        \frac{1}{2} & \lambda = L \\
        0 & \lambda > L
    \end{bmatrix},
\end{equation*}
for odd $M$ and $L = \frac{M + 1}{2}$  give a coupled multirate midpoint method.  Similar to the coupled backward Euler method \cref{eq:be_coupled}, one stage is coupled but with stage reducibility, and all other stages are decoupled.  Reusing the coupling coefficients \cref{eq:be_decoupled} with even $M$ and the midpoint method as the base, we derive a decoupled multirate midpoint method.  Notably, both schemes maintain the algebraic stability, symmetry, and symplecticity \cite{zanna2020discrete} of the midpoint method.  With only odd order terms appearing in the error expansion, they can be used to build efficient multirate extrapolation methods.

We also consider the L-stable, order two SDIRK base method from \cite{alexander1977diagonally}
\begin{equation} \label{eq:sdirk2}
    \begin{butchertableau}{c|cc}
        \gamma & \gamma & 0 \\
        1 & 1 - \gamma & \gamma \\ \hline
        & 1 - \gamma & \gamma \\ \hline
        & \frac{3}{5} & \frac{2}{5}
    \end{butchertableau},
\end{equation}
with $\gamma = 1 - 1 / \sqrt{2}$.  For this base method, an internally consistent standard MrGARK method must have at least one coupled stage.  Enforcing stiff accuracy for both partitions uniquely determines a lightly coupled method:
\begin{equation} \label{eq:sdirk2_stiffly_accurate}
    \begin{split}
        A\FSL &= \begin{bmatrix}
            \frac{\lambda - 1 + \gamma}{M} & 0 \\
            \begin{cases}
                1 - \gamma & \lambda = M \\
                \frac{\lambda}{M} & \text{otherwise}
            \end{cases} &
            \begin{cases}
                \gamma & \lambda = M \\
                0 & \text{otherwise}
            \end{cases} \\
        \end{bmatrix}, \\
        A\SFL &= \begin{bmatrix}
            \begin{cases}
                M \gamma & \lambda = 1 \\
                0 & \text{otherwise}
            \end{cases} & 0 \\
            1 - \gamma & \gamma
        \end{bmatrix}.
    \end{split}
\end{equation}
For this method, the first slow and fast stages are coupled, but with low rank structure.  The last slow and fast stages are also coupled, but with stage reducibility.  All other stages are decoupled.  Another coupling strategy is that of Kv\ae rn\o{} and Rentrop \cite{kvaerno1999low,gunther2016multirate} in which the first micro-step and the macro-step are computed together.  The following coupling coefficients take this approach and also enforce \cref{eq:scalar_rinf}:
\begin{equation} \label{eq:sdrik2_kr}
    \begin{split}
        A\FSL &= \begin{bmatrix}
            \frac{\gamma (2 \lambda - 1)}{M}
            & \begin{cases}
                0 & \lambda = 1 \\
                \frac{(\lambda - 1) (1 - 2 \gamma)}{M} & \lambda > 1
            \end{cases} \\
            \frac{1 - 3 \gamma + 2 \gamma \lambda}{M} & \frac{3 \gamma -1 + (1 - 2 \gamma) \lambda}{M}
        \end{bmatrix}, \\
        A\SFL &= \begin{cases}
            M \begin{bmatrix}
                \gamma & 0 \\ 1 - \gamma & \gamma
            \end{bmatrix} & \lambda = 1 \\
            0_{2 \times 2} & \text{otherwise}
        \end{cases}.
    \end{split}
\end{equation}
Here, the first two fast and slow stages are coupled, but with low rank structure.

An interesting feature of these two coupled methods is their scalar linear stability functions coincide for all $M$.  Unfortunately instabilities appear near the origin as $M$ increases.  At $M=2$, the methods are only scalar $L(\ang{69.2})$-stable (as defined in \cref{def:scalar_stability_alpha}), and by $M = 6$ they are not even scalar $L(\ang{0})$-stable.  While \cref{eq:sdirk2_stiffly_accurate,eq:sdrik2_kr} may be effective for some problems, we cannot recommend them as general-purpose multirate methods.  This is a surprising result as these seemingly reasonable coupling structures lead to methods with worse stability and a more expensive implementation than the decoupled multirate midpoint method.  As is the case with order one, it appears that internal consistency negatively affects the stability. 

The following \cmrgark{} method, which we will call \cmrgark{} MrGARK SDIRK2, can be derived from stability condition \cref{eq:CS_zs_limit_condition} and internal consistency:
\begin{equation} \label{eq:mr_sdirk2}
    A\FSL = \begin{bmatrix}
         \frac{-\gamma  ((M-2) \gamma +3)+(2 \gamma -1) \lambda +1}{M (\gamma -1)} & \frac{\gamma  ((M-1) \gamma -\lambda +1)}{M (\gamma -1)} \\
         \frac{M \gamma ^2-2 \lambda  \gamma +\lambda }{M-M \gamma } & \frac{\gamma  (M \gamma -\lambda )}{M (\gamma -1)}
    \end{bmatrix}.
\end{equation}
The angles of L$(\alpha)$-stability for several values of $M$ are listed in \cref{tab:method_stability}.  Unlike the aforementioned internally consistent methods of order two, \cmrgark{} MrGARK SDIRK2 maintains a wide angle when $M$ is large.

\subsection{Higher order methods}

\todo{Method coefficients have been moved to the appendices}
Following the results at order two, we focus our search for implicit multirate GARK methods of orders three and four on \cmrgark{} methods.  Stiff accuracy, L-stability of the base method, and \cref{eq:CS_zs_limit_condition} are enforced to ensure acceptable stability properties.  Internal consistency drastically reduces the number of order conditions at these higher orders and allows us to use the simplified conditions in \cref{eq:cs_order_conditions}.  Finally, coupling coefficients are derived such as to be bounded functions of $\lambda$ and $M$.  Without this, methods are susceptible to catastrophic cancellation when $M$ is large.  In \cref{app:mr_sdirk3,app:mr_sdirk4}, we have listed the coefficients of the third and fourth order methods we derived with the aforementioned constraints.

\ifreport
Despite the stability issues observed at second order, we also consider a third order implicit multirate method with Kv\ae rn\o --Rentrop coupling using the following algebraically stable base method from \cite{norsett1974semi}:
\begin{equation} \label{eq:dirk2}
    \begin{butchertableau}{c|cccc}
        \gamma & \gamma \\
        1 - \gamma  & 1 - 2\gamma & \gamma \\
        \hline
        & 1/2 &  1/2 \\
    \end{butchertableau}, \qquad \gamma = \frac{3 + \sqrt{3}}{6}.
\end{equation}
Following the approach in \cite{kvaerno1999low,gunther2016multirate}, the slow to fast coupling is chosen to be 
\begin{subequations}
    \begin{align*}
        A\FSL[\lambda + 1] &= \frac{1}{M} \left( A\FS + F(\lambda) \right),\\
        F(\lambda) &= \vecone_{s\F} \begin{bmatrix} \eta_1(\lambda) & \dots & \eta_{s\S}(\lambda) \end{bmatrix}, \qquad \lambda = 0,\ldots,M-1,
    \end{align*}
\end{subequations}
where the $\eta_j$ satisfy $\sum_{j=1}^{s\S} \eta_j(\lambda) = \lambda$.
This results in the internal consistency condition reducing to
\begin{equation} \label{eq:kv1}
    A\FS \, \vecone_{s\S} = c,
\end{equation}
and the third order coupling condition becoming
\begin{equation} \label{eq:kv2}
    \frac{M}{6} = b^T \left( A\FS + \frac{1}{M} \sum_{\lambda=1}^{M} F(\lambda) \right) c.
\end{equation}
%
At third order, this approach creates coefficients that grow unbounded with $M$.  Moreover, we were unable to find an A$(\ang{0})$-stable method satisfying the constraints \cref{eq:kv1,eq:kv2}.
\fi

\subsection{Scalar stability of new \cmrgark{} methods}

The scalar stability of \cmrgark{} methods \cref{eq:mr_sdirk2,eq:mr_sdirk3,eq:mr_sdirk4} are summarized in \cref{tab:method_stability}.  In all cases, the methods are just a few degrees short of scalar L-stability.  As $M$ increases, the stability angles decrease by less than $\ang{2}$ before stabilizing.

\begin{table}[ht!]
    \centering
    \begin{tabular}{c|cccccc}
        \shortstack{\cmrgark* \\ method} & $M = 2$ & $M = 3$ & $M = 4$ & $M = 8$ & $M = 16$ & $M = 32$ \\ \hline
        SDIRK2 from \labelcref{eq:mr_sdirk2} & \ang{84.6} & \ang{83.5} & \ang{83.2} & \ang{83.0} & \ang{83.0} & \ang{83.0} \\
        SDIRK3 from \labelcref{eq:mr_sdirk3} & \ang{88.6} & \ang{87.8} & \ang{87.3} & \ang{86.9} & \ang{86.8} & \ang{86.8} \\
        SDIRK4 from \labelcref{eq:mr_sdirk4} & \ang{81.7} & \ang{81.2} & \ang{81.2} & \ang{81.2} & \ang{81.2} & \ang{81.2} \\
    \end{tabular}
    \caption{Scalar $L(\alpha)$-stability (as defined in \cref{def:scalar_stability_alpha}) for new \cmrgark{} MrGARK methods.}
    \label{tab:method_stability}
\end{table}
\section{Numerical Experiments}
\label{sec:experiments}

In this section, we use the new methods to integrate two test problems.  First, the CUSP model is used to verify the order of accuracy.  Next, the inverter chain model is used to compare the performance of multirate methods against single rate and implicit-explicit (IMEX) methods.

\subsection{CUSP Model}
\label{subsec:cusp}

The CUSP model, as reported in \cite[Chapter IV.10]{Hairer_book_II}, is a reaction-diffusion model defined with the equations
\begin{align} \label{eq:cusp}
	\begin{split}
	 	\pdv{y}{t} &= -\frac{1}{\varepsilon} \left( y^3 + a \, y + b \right) + \sigma \, \pdv[2]{y}{x}, \\
	 	\pdv{a}{t} &=  b+ 0.07 \, v +  \sigma \, \pdv[2]{a}{x},\\
	 	\pdv{b}{t} &= b \, (1-a^2) - a- 0.4 \, y + 0.035 \, v + \sigma \, \pdv[2]{b}{x},
	\end{split}
\end{align}
where $v = \frac{u}{u+ 0.1}$ and $u = (y-0.7) \, (y-1.3)$.  The parameters are $\sigma = \frac{1}{144}$ and $\varepsilon = 10^{-4}$, which makes the problem stiff.  \Cref{eq:cusp} is integrated from $t=0$ to $t=1.1$ over the spatial domain $x \in [0, 1]$.  In our numerical experiments, we use second order central finite differences on a uniform mesh with $N=32$ points and periodic boundary conditions. The initial conditions are
\begin{equation*}
	y_i(0) =0, \quad a_i(0) = -2 \cos(\frac{2\pi i}{N}), \quad b_i(0) = 2\sin(\frac{2\pi i}{N}), \quad \text{for} \quad  i=1,\dots,N.
\end{equation*}

The splitting of the right-hand side function is done over the physics: diffusion is considered as the slow function, and the remaining reactive terms are the fast function. The MATLAB implementation of the CUSP problem is available in \cite{Sandu_2019_ODE-tests}.  We use MATLAB's \texttt{ode15s} to compute a high-accuracy reference solution with absolute and relative tolerances set to $10^{-13}$.  Error is measured as the 2-norm of the difference of the numerical solution and this reference solution at time $t = 1.1$.


\Cref{fig:convergence} shows convergence results for the decoupled multirate midpoint method and \cmrgark{} MrGARK SDIRK methods of orders two, three, and four using a range of multirate ratios.  For \cmrgark{} MrGARK SDIRK4, the numerical rate of convergence is slightly higher than the nominal order.  In all other cases, the numerical orders closely match the theoretical ones.  We also note that for a fixed number of steps, the error decreases as the multirate ratio increases as expected.

\begin{figure}[ht!]
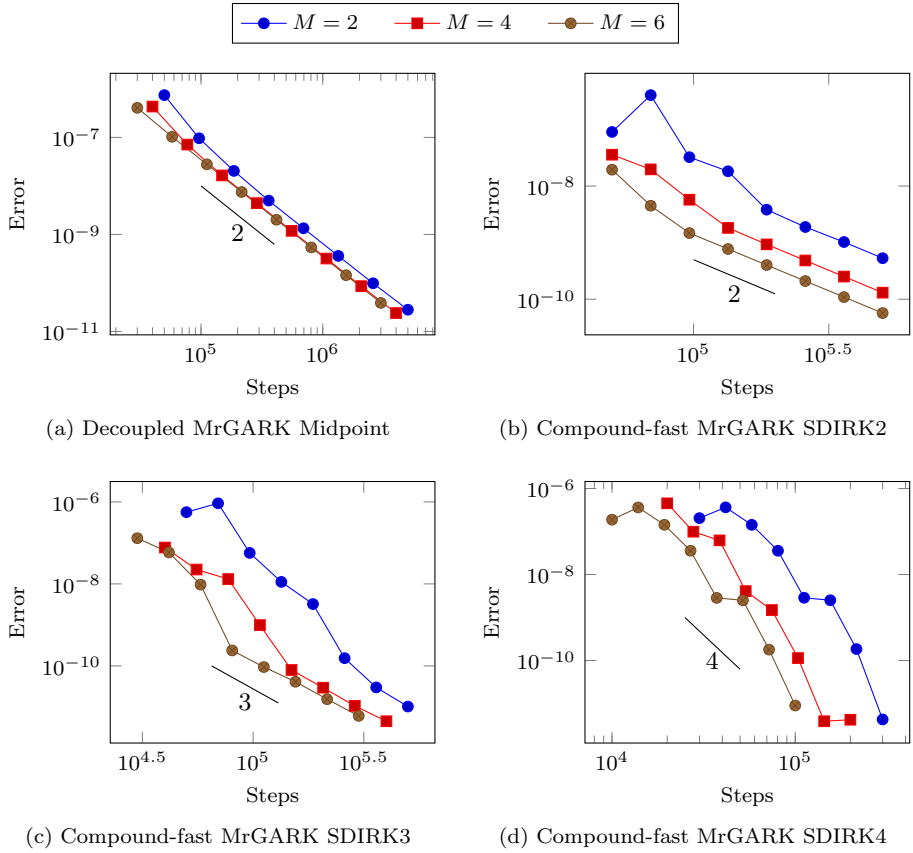

	\centering
 	\ref*{conv}
 	
 	\subfloat[Decoupled MrGARK Midpoint]{\convplot{0.48\textwidth}{midpoint.txt}{1e5,1e-8}{4e5,6.25e-10}{2}*}
    \hfill
  	\subfloat[\cmrgark*{} MrGARK SDIRK2]{\convplot{0.48\textwidth}{cf2.txt}{1e5,5.0e-10}{2e5,0.25*5e-10}{2}}
    
	\subfloat[\cmrgark*{} MrGARK SDIRK3]{\convplot{0.48\textwidth}{cf3.txt}{65e3,1.0e-10}{130e3,0.125*1.e-10}{3}}
    \hfill
    \subfloat[\cmrgark*{} MrGARK SDIRK4]{\convplot{0.48\textwidth}{cf4.txt}{2.5e4,1.0e-9}{5e4,(1/16*1.0e-9}{4}}
    
	\caption{Error vs.  number of macro-steps for the decoupled midpoint method \cref{eq:be_decoupled} and \cmrgark{} methods \cref{eq:mr_sdirk2,eq:mr_sdirk3,eq:mr_sdirk4} applied to the CUSP problem \cref{eq:cusp}.  Reference slopes are included to compare with the numerical orders.}
	\label{fig:convergence}
\end{figure}



\subsection{Inverter Chain Model}

We also consider the inverter chain model of \cite{kvaerno1999low,Bartel2002} given by the equations
\begin{equation} \label{eq:inverter_chain}
    \begin{split}
        U'_1 &= U_{op} - U_1 - \Gamma \, g(U_{in}, U_1, U_0), \\
        U'_i &= U_{op} - U_i - \Gamma \, g(U_{i-1}, U_i, U_0), \qquad i = 2, \dots m,
    \end{split}
\end{equation}
with
\begin{align*}
    g(U_g, U_D, U_S) = (\max(U_G - U_S - U_T, 0))^2 - (\max(U_G - U_D - U_T, 0))^2.
\end{align*}
It models the propagation of the input signal
\begin{align*}
    U_{in}(t) = \begin{cases}
    t - 5 & 5 \le t \le 10 \\
    5 & 10 \le t \le 15 \\
    \frac{5}{2} (17 - t) & 15 \le t \le 17 \\
    0 & \text{otherwise}
    \end{cases}
\end{align*}
through a sequence of $m$ metal-oxide-semiconductor field-effect transistor (MOSFET) inverters.  The ground voltage is $U_0 = 0$, the operating voltage is $U_{op} = 5$, and the threshold voltage separating the on and off states is $U_T = 1$.  Stiffness is controlled by $\Gamma$ and is taken to be $100$ as in the stiff case used in \cite{Bartel2002}.  The initial conditions of the system are
\begin{align*}
    U_i(0) = \begin{cases}
        6.246 \times 10^{-3} & i \text{ even} \\
        5 & i \text{ odd}
    \end{cases}.
\end{align*}
For the numerical experiments, we use $m = 500$ inverters and a timespan of $[0, 120]$ to allow the signal to reach the end of the chain.

In this numerical experiment, we compare the performance of three types of methods on the inverter chain: single rate, IMEX Runge--Kutta, and \cmrgark{} MrGARK.  While \cmrgark{} MrGARK can use dynamic partitioning to select the fast inverters as described in \cref{sec:MrGARK:cs}, there is not a direct analog for IMEX Runge--Kutta methods.  For this reason, we use a fixed, time-dependent partitioning that follows the propagation of the signal though the chain.  Inverters with indices in the range
\begin{equation*}
    \left[\min(\max(1, \floor{4.75 t - 95}), m + 1),
    \min(\max(0, \floor{4.75 t - 15}), m) \right]
\end{equation*}
form the fast partition.  Only these inverters are treated with a microstep by multirate schemes and implicitly by IMEX schemes.

\todo{This paragraph has been rewritten to be more clear}
At second order, we consider the performance of \cmrgark{} MrGARK SDIRK2 from \cref{eq:mr_sdirk2}.  The primary baseline is its base method: SDIRK2 from \cref{eq:sdirk2}.  SDIRK2 is a traditional Runge--Kutta method and treats all inverters with the same timestep.  We also test the IMEX Runge--Kutta scheme ARS(2,3,2) from \cite[Section 2.5]{ascher1997implicit}.  Note that the implicit part of ARS(2,3,2) is SDIRK2, which makes for a fair comparison among all three second order methods.  At third order, we use \cmrgark{} MrGARK SDIRK3 from \cref{eq:mr_sdirk3}, its base method SDIRK3 from \cref{eq:sdirk3}, and ARS(3,4,3) from \cite[Section 2.7]{ascher1997implicit}.  The implicit part of ARS(3,4,3) is SDIRK3.  At fourth order, we use \cmrgark{} MrGARK SDIRK4 from \cref{eq:mr_sdirk4}.  SDIRK4 from \cite[pg. 100]{Hairer_book_II} is slightly more optimized than \cref{eq:sr_sdirk4}, so we use that for the traditional Runge--Kutta baseline.  Finally, ARK4(3)6L[2]SA from \cite{kennedy2001additive} is used as the fourth order IMEX scheme.  The multirate ratios we use are $M = 14, 10, 6$ for orders two, three, and four, respectively.

A serial C implementation of the inverter chain and integrators was run on the Cascades cluster managed by Advanced Research Computing (ARC) at Virginia Tech.  In the experiment, the error and runtime were recorded for a range of eight stepsizes for all nine methods.  Error is computed in the infinity-norm with respect to a high-accuracy reference solution.  \Cref{fig:performance} plots the timing results.  At orders two and three, we can see the \cmrgark{} MrGARK methods reach a fixed accuracy four to six times faster than the single rate methods and are slightly more efficient than the IMEX methods.  The fourth order multirate and IMEX methods have similar performance and are approximately three times faster than the single rate method.

\begin{figure}[ht!]
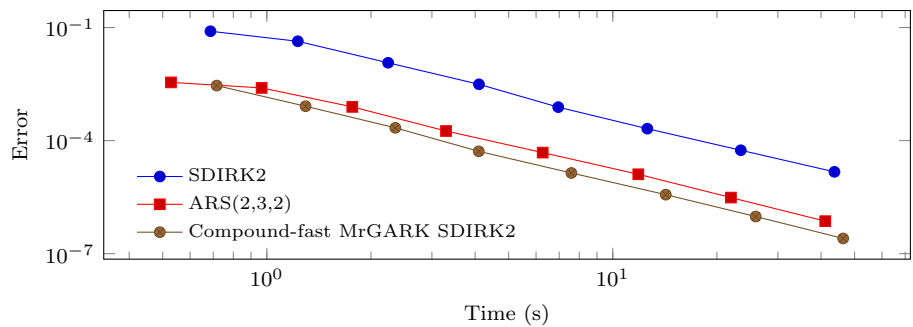
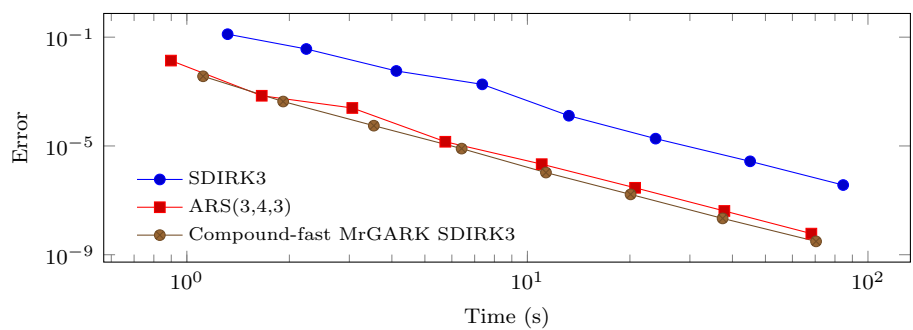
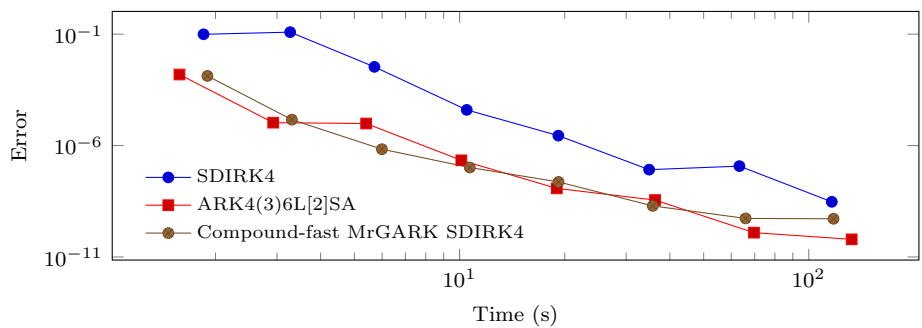

	\centering
	
	\subfloat[Second order]{\wpplot{SDIRK2, {ARS(2,3,2)}, \cmrgark*{} MrGARK SDIRK2}{cf2.txt}}
    \\
    \subfloat[Third order]{\wpplot{SDIRK3, {ARS(3,4,3)}, \cmrgark*{} MrGARK SDIRK3}{cf3.txt}}
    \\
    \subfloat[Fourth order]{\wpplot{SDIRK4, {ARK4(3)6L[2]SA}, \cmrgark*{} MrGARK SDIRK4}{cf4.txt}}

	\caption{Error vs. time for single rate, IMEX, and \cmrgark{} methods applied to the inverter chain problem \cref{eq:inverter_chain}}
	\label{fig:performance}
\end{figure}


Despite the similar performance of the IMEX and multirate methods, the number of steps required to reach a desired accuracy is very different.  The stiffness of the inverter chain problem, even in the slow partition, forces the IMEX schemes to take relatively small timesteps.  \Cref{tab:stepsizes} lists the largest timestep each of the tested methods could take.  For comparison, we have also included single rate explicit methods.  In particular, we use Ralston's optimal second and third order methods \cite{ralston1962runge} and the classical fourth order Runge--Kutta method.  Note that the IMEX methods have an explicit-like stepsize restriction for this problem.  The \cmrgark{} MrGARK methods have the same maximum stepsize as the implicit single rate methods, which indicates stiffness in the slow partition is likely limiting the stepsize.

\begin{table}[ht!]
    \centering
    \begin{tabular}{c|cccc}
        & \shortstack{Single rate\\implicit} & \shortstack{Single rate\\explicit} & IMEX & \cmrgark*{} \\ \hline
        Order 2 & $7.1 \times 10^{-2}$ & $3.1 \times 10^{-3}$ & $3.2 \times 10^{-3}$ & $7.1 \times 10^{-2}$ \\
        Order 3 & $5.2 \times 10^{-2}$ & $3.2 \times 10^{-3}$ & $3.5 \times 10^{-3}$ & $5.2 \times 10^{-2}$ \\
        Order 4 & $6.0 \times 10^{-2}$ & $3.4 \times 10^{-3}$ & $5.2 \times 10^{-3}$ & $6.0 \times 10^{-2}$
    \end{tabular}
    \caption{Approximate largest stepsizes to ensure stability and convergence of Newton iterations for the inverter chain problem \cref{eq:inverter_chain}.}
    \label{tab:stepsizes}
\end{table}
\section{Conclusions}
\label{sec:conclusion}

In this work, we have explored multirate Runge--Kutta methods in which all time-scales are treated implicitly.  By taking different timesteps for different partitions of an ODE, these methods can more efficiently integrate stiff, multiscale problems.  We have examined their order conditions, their linear stability, and techniques for solving implicit stage equations.  In \cref{app:invariants}, we have also added a short note on conservation of linear invariants.

Compared to single rate methods, the linear stability for multirate methods is much more intricate.  It not only depends on the base methods and coupling structure but also the choice of test problem.  The scalar and 2D test problems present a trade-off of generality versus simplicity to analyze.  The theoretical limitations and observed degradation of multirate stability often come from problems that are oscillatory.  These problems are challenging because the error introduced by the coupling is not damped by any partition.  In addition, we found that forgoing internal consistency can improve stability but increases the number of order conditions and limits the stage order to zero.

The coupling structure of MrGARK methods has a significant effect on the computational cost of the Newton iterations.  Decoupled methods are the cheapest and simplest to implement, especially for component partitioned problems.  Coupled methods have the potential to become prohibitively expensive but can be implemented efficiently by exploiting stage reducibility or low rank structure in the method.

The GARK framework provides new insight into the \cmrgark{} methods.  Instead of taking the approach of finding a dense output formula for coupling, we use the precise GARK order conditions.  This approach facilitated the development of methods up to order four, which to our knowledge, is the highest of this type.  Stability depends heavily on this coupling, so we derived a practical and general form for the scalar stability function.  By taking the limit as the partitions become infinitely stiff, we found a simple condition to ensure L$(\alpha)$-stability.

New standard MrGARK methods based on backward Euler and the midpoint method show excellent stability properties.  For base methods with more than one stage, however, we were unable to find methods with satisfactory stability.  Extrapolation may be the most practical way to achieve high-order, but this warrants additional investigation.
\todo{Removed irrelevant last paragraph}
\section*{Declarations}

\subsection*{Funding}

The work of S. Roberts (in part), J. Loffeld, and C.S Woodward was supported by the Lawrence Livermore Laboratory Directed Research and Development Program under tracking number 17-ERD-035.  
Their work was performed under the auspices of the U.S. Department of Energy by Lawrence Livermore National Laboratory under contract DE-AC52-07NA27344. Lawrence Livermore National Security, LLC.
LLNL-JRNL-795454.
The work of S. Roberts (in part), A. Sarshar, and A. Sandu 
was supported by the National Science Foundation (awards CCF--1613905 and ACI--1709727), Air Force Office of Scientific Research (award DDDAS FA9550-17-1-0015), and by the Computational Science Laboratory at Virginia Tech.
The work of S. Roberts (in part) was supported by the Virginia Space Grant Consortium.

This document was prepared as an account of work sponsored by an agency of the United States government. Neither the United States government nor Lawrence Livermore National Security, LLC, nor any of their employees makes any warranty, expressed or implied, or assumes any legal liability or responsibility for the accuracy, completeness, or usefulness of any information, apparatus, product, or process disclosed, or represents that its use would not infringe privately owned rights. Reference herein to any specific commercial product, process, or service by trade name, trademark, manufacturer, or otherwise does not necessarily constitute or imply its endorsement, recommendation, or favoring by the United States government or Lawrence Livermore National Security, LLC. The views and opinions of authors expressed herein do not necessarily state or reflect those of the United States government or Lawrence Livermore National Security, LLC, and shall not be used for advertising or product endorsement purposes.

\subsection*{Conflicts of Interest / Competing Interests}

The authors declare that they have no conflict of interest.

\subsection*{Availability of Data and Material}

The datasets generated during and/or analysed during the current study are available from the corresponding author on reasonable request.

\subsection*{Code Availability}

The implementation of the CUSP problem is available in ODE Test Problems: \url{https://github.com/ComputationalScienceLaboratory/ODE-Test-Problems}.

\section*{Acknowledgements}
\label{sec:acknowledgements}

The authors acknowledge Advanced Research Computing at Virginia Tech for providing computational resources and technical support that have contributed to the results reported within this paper.  We also thank Jeffrey Hittinger and Valentin Dallerit for helpful discussions.

\appendix
\section{Third Order Compound-Fast MrGARK} \label{app:mr_sdirk3}
\todo{Appendices added}

A third order \cmrgark{} method, which we will refer to as \cmrgark{} MrGARK SDIRK3, is built on the following base method of Alexander \cite{alexander1977diagonally}:
\begin{equation} \label{eq:sdirk3}
    \begin{butchertableau}{c|ccc}
         \gamma  & \gamma  & 0 & 0 \\
         \frac{\gamma }{2}+\frac{1}{2} & \frac{1}{2}-\frac{\gamma }{2} & \gamma  & 0 \\
         1 & -\frac{3 \gamma ^2}{2}+4 \gamma -\frac{1}{4} & \frac{3 \gamma ^2}{2}-5 \gamma +\frac{5}{4} & \gamma \\ \hline
         & -\frac{3 \gamma ^2}{2}+4 \gamma -\frac{1}{4} & \frac{3 \gamma ^2}{2}-5 \gamma +\frac{5}{4} & \gamma \\ \hline
         & -\frac{3 \gamma ^2}{2}+3 \gamma -\frac{1}{4} & \frac{3 \gamma ^2}{2}-3 \gamma +\frac{5}{4} & 0 \\
    \end{butchertableau}
\end{equation}
Here, $\gamma \approx 0.44$ is the middle root of $6 \gamma ^3-18 \gamma ^2+9 \gamma -1 = 0$.  The coupling coefficients are
\begin{equation} \label{eq:mr_sdirk3}
    \begin{split}
        a\FSL_{1,1} &= \frac{\left(6 \gamma ^2-24 \gamma +5\right) \left(2 \gamma ^2+2 \gamma  (\lambda -1)+(\lambda -1)^2\right)-8 \gamma ^3 M^2}{(\gamma -1) \left(6 \gamma ^2-20 \gamma +5\right) M^2} \\
        & \quad -\frac{\left(6 \gamma ^3-30 \gamma ^2-15 \gamma +5\right) M (\gamma +\lambda -1)}{(\gamma -1) \left(6 \gamma ^2-20 \gamma +5\right) M^2}, \\
        a\FSL_{1,2} &= \frac{-5 (\lambda -1)^2+12 \gamma ^4 (M-1)+4 \gamma ^3 (M-1) (3 \lambda +4 M-17)}{(\gamma -1) \left(6 \gamma ^2-20 \gamma +5\right) M^2} \\
        & \quad + \frac{\gamma ^2 \left(-6 \lambda ^2+68 \lambda +(82-72 \lambda ) M-72\right)+2 \gamma  (\lambda -1) (14 \lambda +5 M-19)}{(\gamma -1) \left(6 \gamma ^2-20 \gamma +5\right) M^2}, \\
        a\FSL_{1,3} &= -\frac{4 \gamma  \left((\lambda -1)^2+2 \gamma ^2 (M-1)^2-2 \gamma  (\lambda -1) (2 M-1)\right)}{(\gamma -1) \left(6 \gamma ^2-20 \gamma +5\right) M^2}, \\
        a\FSL_{2,1} &= -\frac{-2 \left(6 \gamma ^2-24 \gamma +5\right) (\gamma  (\lambda +1)+(\lambda -1) \lambda )+16 \gamma ^3 M^2}{2 (\gamma -1) \left(6 \gamma ^2-20 \gamma +5\right) M^2} \\
        & \quad - \frac{\left(6 \gamma ^3-30 \gamma ^2-15 \gamma +5\right) M (\gamma +2 \lambda -1)}{2 (\gamma -1) \left(6 \gamma ^2-20 \gamma +5\right) M^2}, \\
        a\FSL_{2,2} &= \frac{\gamma  \left(28 \lambda ^2-33 \lambda +5 (2 \lambda -1) M-5\right) + 2 \gamma ^3 \left(8 M^2-3 (\lambda +1)+3 (2 \lambda -7) M\right)}{(\gamma -1) \left(6 \gamma ^2-20 \gamma +5\right) M^2} \\
        & \quad + \frac{6 \gamma ^4 M -5 (\lambda -1) \lambda +\gamma ^2 \left(-6 \lambda ^2+34 \lambda +(41-72 \lambda ) M+28\right)}{(\gamma -1) \left(6 \gamma ^2-20 \gamma +5\right) M^2}, \\
        a\FSL_{2,3} &= -\frac{4 \gamma  \left((\lambda -1) \lambda +2 \gamma ^2 (M-1) M+\gamma  (\lambda +(2-4 \lambda ) M+1)\right)}{(\gamma -1) \left(6 \gamma ^2-20 \gamma +5\right) M^2}, \\
        a\FSL_{3,1} &= \frac{-36 \gamma ^5+252 \gamma ^4+20 \lambda ^2-4 \gamma ^3 \left(8 M^2+6 \lambda  M+129\right)}{4 (\gamma -1) \left(6 \gamma ^2-20 \gamma +5\right) M^2} \\
        & \quad + \frac{24 \gamma ^2 \left(\lambda ^2+5 \lambda  M+13\right)+\gamma  \left(-96 \lambda ^2+60 \lambda  M-69\right)-20 \lambda  M+5}{4 (\gamma -1) \left(6 \gamma ^2-20 \gamma +5\right) M^2}, \\
        a\FSL_{3,2} &= \frac{36 \gamma ^5-276 \gamma ^4-5 \left(4 \lambda ^2+1\right)+\gamma ^3 \left(64 M^2+48 \lambda  M+588\right)}{4 (\gamma -1) \left(6 \gamma ^2-20 \gamma +5\right) M^2} \\
        & \quad + \frac{-12 \gamma ^2 \left(2 \lambda ^2+24 \lambda  M+29\right)+\gamma  \left(112 \lambda ^2+40 \lambda  M+73\right)}{4 (\gamma -1) \left(6 \gamma ^2-20 \gamma +5\right) M^2}, \\
        a\FSL_{3,3} &= \frac{\gamma  \left(6 \gamma ^3-4 \lambda ^2-2 \gamma ^2 \left(4 M^2+9\right)+\gamma  (16 \lambda  M+9)-1\right)}{(\gamma -1) \left(6 \gamma ^2-20 \gamma +5\right) M^2}.
    \end{split}
\end{equation}

\section{Fourth Order Compound-Fast MrGARK} \label{app:mr_sdirk4}

For the \cmrgark{} method of order four, we start with a new base method, solving the coupling and base conditions together. This allows more flexibility to keep the coupling coefficients bounded functions of $\lambda$ and $M$. The following L-stable base method was derived:
\begin{equation} \label{eq:sr_sdirk4}
    \begin{butchertableau}{c|ccccc}
         \frac{1}{4} & \frac{1}{4} & 0 & 0 & 0 & 0 \\
         1 & \frac{3}{4} & \frac{1}{4} & 0 & 0 & 0 \\
         \frac{2}{5} & \frac{69}{400} & -\frac{9}{400} & \frac{1}{4} & 0 & 0 \\
         \frac{7}{11} & \frac{103241}{143748} & -\frac{1751}{71874} & -\frac{11050}{35937} & \frac{1}{4} & 0 \\
         1 & \frac{400}{459} & -\frac{35}{216} & -\frac{250}{351} & \frac{1331}{1768} & \frac{1}{4} \\ \hline
         & \frac{400}{459} & -\frac{35}{216} & -\frac{250}{351} & \frac{1331}{1768} & \frac{1}{4} \\ \hline
         & \frac{10388}{10557} & -\frac{1399}{4968} & -\frac{30425}{32292} & \frac{73205}{81328} & \frac{125}{368}
    \end{butchertableau}.
\end{equation}
When paired with the following coupling coefficients, we have the \cmrgark{} MrGARK SDIRK4 scheme:

\begin{equation} \label{eq:mr_sdirk4}
    \resizebox{\textwidth}{!}{$
    \begin{aligned}
        a\FSL_{1,1} &= \frac{-165 \left(64 \lambda ^4-192 \lambda ^3+240 \lambda ^2-148 \lambda +37\right)+2688 (4 \lambda -3) M^3-3408 \left(8 \lambda ^2-12 \lambda +5\right) M^2+448 \left(64 \lambda ^3-144 \lambda ^2+120 \lambda -37\right) M}{1836 M^4}, \\
        a\FSL_{1,2} &= \frac{1110 \left(64 \lambda ^4-192 \lambda ^3+240 \lambda ^2-148 \lambda +37\right)-1296 M^4-2292 (4 \lambda -3) M^3+8781 \left(8 \lambda ^2-12 \lambda +5\right) M^2-2101 \left(64 \lambda ^3-144 \lambda ^2+120 \lambda -37\right) M}{22464 M^4}, \\
        a\FSL_{1,3} &= -\frac{125 \left(-33 \left(64 \lambda ^4-192 \lambda ^3+240 \lambda ^2-148 \lambda +37\right)+336 (4 \lambda -3) M^3-552 \left(8 \lambda ^2-12 \lambda +5\right) M^2+83 \left(64 \lambda ^3-144 \lambda ^2+120 \lambda -37\right) M\right)}{22464 M^4}, \\
        a\FSL_{1,4} &= \frac{1331 \left(-5 \left(64 \lambda ^4-192 \lambda ^3+240 \lambda ^2-148 \lambda +37\right)+32 (4 \lambda -3) M^3-60 \left(8 \lambda ^2-12 \lambda +5\right) M^2+11 \left(64 \lambda ^3-144 \lambda ^2+120 \lambda -37\right) M\right)}{56576 M^4}, \\
        a\FSL_{1,5} &= \frac{-85 \left(64 \lambda ^4-192 \lambda ^3+240 \lambda ^2-148 \lambda +37\right)+192 M^4+16 (4 \lambda -3) M^3-648 \left(8 \lambda ^2-12 \lambda +5\right) M^2+175 \left(64 \lambda ^3-144 \lambda ^2+120 \lambda -37\right) M}{3328 M^4}, \\
        a\FSL_{2,1} &= \frac{-165 \left(64 \lambda ^4-48 \lambda ^2+68 \lambda -17\right)+10752 \lambda  M^3-3408 \left(8 \lambda ^2-1\right) M^2+448 \left(64 \lambda ^3-24 \lambda +17\right) M}{1836 M^4}, \\
        a\FSL_{2,2} &= \frac{1110 \left(64 \lambda ^4-48 \lambda ^2+68 \lambda -17\right)-1296 M^4-9168 \lambda  M^3+8781 \left(8 \lambda ^2-1\right) M^2-2101 \left(64 \lambda ^3-24 \lambda +17\right) M}{22464 M^4}, \\
        a\FSL_{2,3} &= -\frac{125 \left(-33 \left(64 \lambda ^4-48 \lambda ^2+68 \lambda -17\right)+1344 \lambda  M^3-552 \left(8 \lambda ^2-1\right) M^2+83 \left(64 \lambda ^3-24 \lambda +17\right) M\right)}{22464 M^4}, \\
        a\FSL_{2,4} &= \frac{1331 \left(5 \left(-64 \lambda ^4+48 \lambda ^2-68 \lambda +17\right)+128 \lambda  M^3-60 \left(8 \lambda ^2-1\right) M^2+11 \left(64 \lambda ^3-24 \lambda +17\right) M\right)}{56576 M^4}, \\
        a\FSL_{2,5} &= \frac{-85 \left(64 \lambda ^4-48 \lambda ^2+68 \lambda -17\right)+192 M^4+64 \lambda  M^3-648 \left(8 \lambda ^2-1\right) M^2+175 \left(64 \lambda ^3-24 \lambda +17\right) M}{3328 M^4}, \\
        a\FSL_{3,1} &= \frac{-33 \left(32000 \lambda ^4-76800 \lambda ^3+84720 \lambda ^2-56180 \lambda +15773\right)+215040 (5 \lambda -3) M^3}{183600 M^4} \\
        & \quad + \frac{-3408 \left(800 \lambda ^2-960 \lambda +353\right) M^2+448 \left(6400 \lambda ^3-11520 \lambda ^2+8472 \lambda -2809\right) M}{183600 M^4}, \\
        a\FSL_{3,2} &= \frac{222 \left(32000 \lambda ^4-76800 \lambda ^3+84720 \lambda ^2-56180 \lambda +15773\right)-129600 M^4-183360 (5 \lambda -3) M^3}{2246400 M^4} \\
        & \quad + \frac{8781 \left(800 \lambda ^2-960 \lambda +353\right) M^2-2101 \left(6400 \lambda ^3-11520 \lambda ^2+8472 \lambda -2809\right) M}{2246400 M^4}, \\
        a\FSL_{3,3} &= \frac{33 \left(32000 \lambda ^4-76800 \lambda ^3+84720 \lambda ^2-56180 \lambda +15773\right)-134400 (5 \lambda -3) M^3}{89856 M^4} \\
        & \quad + \frac{2760 \left(800 \lambda ^2-960 \lambda +353\right) M^2-415 \left(6400 \lambda ^3-11520 \lambda ^2+8472 \lambda -2809\right) M}{89856 M^4}, \\
        a\FSL_{3,4} &= \frac{1331 \left(-32000 \lambda ^4+76800 \lambda ^3-84720 \lambda ^2+56180 \lambda +2560 (5 \lambda -3) M^3\right)}{5657600 M^4} \\
        & \quad + \frac{1331 \left(-60 \left(800 \lambda ^2-960 \lambda +353\right) M^2+11 \left(6400 \lambda ^3-11520 \lambda ^2+8472 \lambda -2809\right) M-15773\right)}{5657600 M^4}, \\
        a\FSL_{3,5} &= \frac{-17 \left(32000 \lambda ^4-76800 \lambda ^3+84720 \lambda ^2-56180 \lambda +15773\right)+19200 M^4+1280 (5 \lambda -3) M^3}{332800 M^4} \\
        & \quad + \frac{-648 \left(800 \lambda ^2-960 \lambda +353\right) M^2+175 \left(6400 \lambda ^3-11520 \lambda ^2+8472 \lambda -2809\right) M}{332800 M^4}, \\
        a\FSL_{4,1} &= \frac{-33 \left(425920 \lambda ^4-619520 \lambda ^3+280720 \lambda ^2-35660 \lambda +2387\right)+1300992 (11 \lambda -4) M^3}{2443716 M^4} \\
        & \quad + \frac{-137456 \left(264 \lambda ^2-192 \lambda +29\right) M^2+448 \left(85184 \lambda ^3-92928 \lambda ^2+28072 \lambda -1783\right) M}{2443716 M^4}, \\
        a\FSL_{4,2} &= \frac{222 \left(425920 \lambda ^4-619520 \lambda ^3+280720 \lambda ^2-35660 \lambda +2387\right)-1724976 M^4-1109328 (11 \lambda -4) M^3}{29899584 M^4} \\
        & \quad + \frac{354167 \left(264 \lambda ^2-192 \lambda +29\right) M^2-2101 \left(85184 \lambda ^3-92928 \lambda ^2+28072 \lambda -1783\right) M}{29899584 M^4}, \\
        a\FSL_{4,3} &= -\frac{25 \left(-33 \left(425920 \lambda ^4-619520 \lambda ^3+280720 \lambda ^2-35660 \lambda +2387\right)+813120 (11 \lambda -4) M^3\right)}{29899584 M^4} \\
        & \quad - \frac{25 \left(-111320 \left(264 \lambda ^2-192 \lambda +29\right) M^2+415 \left(85184 \lambda ^3-92928 \lambda ^2+28072 \lambda -1783\right) M\right)}{29899584 M^4}, \\
        a\FSL_{4,4} &= \frac{-425920 \lambda ^4+619520 \lambda ^3-280720 \lambda ^2+35660 \lambda +15488 (11 \lambda -4) M^3}{56576 M^4} \\
        & \quad + \frac{-2420 \left(264 \lambda ^2-192 \lambda +29\right) M^2+11 \left(85184 \lambda ^3-92928 \lambda ^2+28072 \lambda -1783\right) M-2387}{56576 M^4}, \\
        a\FSL_{4,5} &= \frac{-17 \left(425920 \lambda ^4-619520 \lambda ^3+280720 \lambda ^2-35660 \lambda +2387\right)+255552 M^4+7744 (11 \lambda -4) M^3}{4429568 M^4} \\
        & \quad + \frac{-26136 \left(264 \lambda ^2-192 \lambda +29\right) M^2+175 \left(85184 \lambda ^3-92928 \lambda ^2+28072 \lambda -1783\right) M}{4429568 M^4}, \\
        a\FSL_{5,1} &= \frac{16 \lambda  \left(-165 \lambda ^3+168 M^3-426 \lambda  M^2+448 \lambda ^2 M\right)}{459 M^4}, \\
        a\FSL_{5,2} &= -\frac{-8880 \lambda ^4+162 M^4+1146 \lambda  M^3-8781 \lambda ^2 M^2+16808 \lambda ^3 M}{2808 M^4}, \\
        a\FSL_{5,3} &= \frac{125 \lambda  \left(33 \lambda ^3-21 M^3+69 \lambda  M^2-83 \lambda ^2 M\right)}{351 M^4}, \\
        a\FSL_{5,4} &= \frac{1331 \lambda  \left(-10 \lambda ^3+4 M^3-15 \lambda  M^2+22 \lambda ^2 M\right)}{1768 M^4}, \\
        a\FSL_{5,5} &= \frac{-85 \lambda ^4+3 M^4+\lambda  M^3-81 \lambda ^2 M^2+175 \lambda ^3 M}{52 M^4}.
    \end{aligned}
    $}
\end{equation}

\section{Conservation of Linear Invariants} \label{app:invariants}

For some applications, it is desirable that a numerical integrator uphold invariant properties of the system, such as conservation of mass and energy. These invariants are generally expressed as \emph{first integrals} of the system. It is well known that non-partitioned explicit and implicit Runge--Kutta methods conserve linear first integrals but must meet certain restrictions to conserve quadratic ones, e.g. as with symplectic methods. Partitioned Runge--Kutta methods must obey additional restrictions to conserve even linear invariants. A detailed discussion about preservation of first integrals by Runge--Kutta methods can be found in \cite{Hairer2006}.

GARK schemes are partitioned methods, and here we briefly describe conditions for them to uphold linear first integrals.  Consider an ODE \cref{eq:ode} satisfying the following linear invariant:
\begin{equation} \label{eq:linear_invariant}
    \zeta^T \left( f\F(y) + f\S(y) \right) = 0
    \quad \Rightarrow \quad
    \zeta^T y(t) = \text{const}
\end{equation}
When a GARK method applied to this system, the step update \cref{eq:GARK} satisfies
\begin{equation} \label{eq:gark_linear_invariant}
    \zeta^T y_{n+1} = \zeta^T y_{n} + H \, \sum_{j=1}^{s\F} b\F_j \,  \zeta^T  \, f\F \mleft( Y\F_j \mright) + H \, \sum_{j=1}^{s\S} b\S_j \,  \zeta^T \, f\S \mleft( Y\S_j \mright)
\end{equation}
In order to apply \cref{eq:linear_invariant}, the arguments of $f\F$ and $f\S$ must be identical.  In general, the arguments in \cref{eq:gark_linear_invariant} are different: $Y\F_j \neq Y\S_j$.  Moreover, the number of slow stages can be different than the number of fast stages, which prevents the pairing of terms as in \cref{eq:linear_invariant}.  Therefore, a general GARK method cannot be expected to preserve linear invariants.

There are special cases, however, where is is possible.  If the subsystems individually satisfy
\begin{equation*}
    \zeta^T f\F(y) = 0,
    \quad \text{and} \quad
    \zeta^T f\S(y) = 0,
\end{equation*}
one can see \cref{eq:gark_linear_invariant} simplifies to $\zeta^T y_{n+1} = \zeta^T y_{n}$.  Also, when
\begin{equation} \label{eq:GARK_ARK_special_case}
    \mathbf{A}\FF = \mathbf{A}\SF,
    \quad
    \mathbf{A}\FS = \mathbf{A}\SS,
    \quad
    \mathbf{b}\F = \mathbf{b}\S,
\end{equation}
we can achieve $s\F$ = $s\S$ and $Y\F_j = Y\S_j$.  In fact, this GARK method degenerates into a special class of partitioned Runge--Kutta schemes known to preserve linear invariants \cite{Hairer2006}.  The multirate methods in \cite{Sandu_2007_MR_RK2}, for example, satisfy \cref{eq:GARK_ARK_special_case}.

\bibliographystyle{spmpsci}
\bibliography{main}

\begin{thebibliography}{10}
\providecommand{\url}[1]{{#1}}
\providecommand{\urlprefix}{URL }
\expandafter\ifx\csname urlstyle\endcsname\relax
  \providecommand{\doi}[1]{DOI~\discretionary{}{}{}#1}\else
  \providecommand{\doi}{DOI~\discretionary{}{}{}\begingroup
  \urlstyle{rm}\Url}\fi

\bibitem{alexander1977diagonally}
Alexander, R.: Diagonally implicit {R}unge--{K}utta methods for stiff {ODEs}.
\newblock SIAM Journal on Numerical Analysis \textbf{14}(6), 1006--1021 (1977).
\newblock \doi{10.1137/0714068}

\bibitem{andrus1993stability}
Andrus, J.: Stability of a multi-rate method for numerical integration of
  {ODE}s.
\newblock Computers \& Mathematics with applications \textbf{25}(2), 3--14
  (1993)

\bibitem{ascher1997implicit}
Ascher, U.M., Ruuth, S.J., Spiteri, R.J.: Implicit-explicit {R}unge--{K}utta
  methods for time-dependent partial differential equations.
\newblock Applied Numerical Mathematics \textbf{25}(2), 151 -- 167 (1997).
\newblock \doi{10.1016/S0168-9274(97)00056-1}.
\newblock Special Issue on Time Integration

\bibitem{Bartel2002}
Bartel, A., G{\"{u}}nther, M.: A multirate {W}-method for electrical networks
  in state-space formulation.
\newblock Journal of Computational and Applied Mathematics \textbf{147}(2),
  411--425 (2002).
\newblock \doi{10.1016/S0377-0427(02)00476-4}

\bibitem{beam1976implicit}
Beam, R.M., Warming, R.: An implicit finite-difference algorithm for hyperbolic
  systems in conservation-law form.
\newblock Journal of Computational Physics \textbf{22}(1), 87 -- 110 (1976).
\newblock \doi{10.1016/0021-9991(76)90110-8}

\bibitem{bonaventura2018self}
Bonaventura, L., Casella, F., Delpopolo, L., Ranade, A.: A self adjusting
  multirate algorithm based on the {TR-BDF2} method.
\newblock arXiv preprint arXiv:1801.09118  (2018)

\bibitem{Sandu_2007_MR_RK2}
Constantinescu, E., Sandu, A.: Multirate timestepping methods for hyperbolic
  conservation laws.
\newblock Journal on Scientific Computing \textbf{33}(3), 239--278 (2007).
\newblock \doi{10.1007/s10915-007-9151-y}

\bibitem{constantinescu2013extrapolated}
Constantinescu, E.M., Sandu, A.: Extrapolated multirate methods for
  differential equations with multiple time scales.
\newblock Journal of Scientific Computing \textbf{56}(1), 28--44 (2013).
\newblock \doi{10.1007/s10915-012-9662-z}

\bibitem{DelpopoloCarciopolo2018}
Delpopolo~Carciopolo, L., Bonaventura, L., Scotti, A., Formaggia, L.: A
  conservative implicit multirate method for hyperbolic problems.
\newblock Computational Geosciences  (2018).
\newblock \doi{10.1007/s10596-018-9764-2}

\bibitem{gear1974multirate}
Gear, C.: Multirate methods for ordinary differential equations.
\newblock Tech. rep., Illinois Univ., Urbana (USA). Dept. of Computer Science
  (1974)

\bibitem{gear1984multirate}
Gear, C.W., Wells, D.: Multirate linear multistep methods.
\newblock BIT Numerical Mathematics \textbf{24}(4), 484--502 (1984)

\bibitem{Gunther1997}
G{\"{u}}nther, M., Hoschek, M.: {ROW methods adapted to electric circuit
  simulation packages}.
\newblock Journal of Computational and Applied Mathematics \textbf{82}(1-2),
  159--170 (1997).
\newblock \doi{10.1016/S0377-0427(97)00043-5}

\bibitem{gunther1993multirate}
G\"{u}nther, M., Rentrop, P.: Multirate {ROW} methods and latency of electric
  circuits.
\newblock Applied Numerical Mathematics \textbf{13}(1), 83 -- 102 (1993).
\newblock \doi{10.1016/0168-9274(93)90133-C}

\bibitem{gunther2016multirate}
G{\"u}nther, M., Sandu, A.: Multirate generalized additive {R}unge--{K}utta
  methods.
\newblock Numerische Mathematik \textbf{133}(3), 497--524 (2016).
\newblock \doi{10.1007/s00211-015-0756-z}

\bibitem{hachtel2019multirate}
Hachtel, C., Bartel, A., Günther, M., Sandu, A.: Multirate implicit {E}uler
  schemes for a class of differential-algebraic equations of index-1.
\newblock Journal of Computational and Applied Mathematics p. 112499 (2019).
\newblock \doi{10.1016/j.cam.2019.112499}

\bibitem{Hairer_book_II}
Hairer, E., Wanner, G.: Solving ordinary differential equations {II}: {S}tiff
  and differential-algebraic problems, 2 edn.
\newblock No.~14 in Springer Series in Computational Mathematics.
  Springer-Verlag Berlin Heidelberg (1996)

\bibitem{Hairer2006}
Hairer, E., Wanner, G., Lubich, C.: {Geometric Numerical Integration}.
\newblock Springer-Verlag (2006).
\newblock \doi{10.1007/3-540-30666-8}

\bibitem{hundsdorfer2009analysis}
Hundsdorfer, W., Savcenco, V.: Analysis of a multirate {T}heta-method for stiff
  {ODE}s.
\newblock Appl. Numer. Math. \textbf{59}(3-4), 693--706 (2009).
\newblock \doi{10.1016/j.apnum.2008.03.022}

\bibitem{kennedy2001additive}
Kennedy, C.A., Carpenter, M.H.: Additive {R}unge--{K}utta schemes for
  convection--diffusion--reaction equations.
\newblock Applied Numerical Mathematics \textbf{44}(1), 139 -- 181 (2003).
\newblock \doi{10.1016/S0168-9274(02)00138-1}

\bibitem{Knoth_1998_MR-IMEX}
Knoth, O., Wolke, R.: Implicit-explicit {R}unge-{K}utta methods for computing
  atmospheric reactive flows.
\newblock Applied Numerical Mathematics \textbf{28}(327--341) (1998)

\bibitem{kvaerno2000stability}
Kv{\ae}rn{\o}, A.: Stability of multirate {Runge--Kutta} schemes (2000).
\newblock \doi{10.1.1.40.8965}

\bibitem{kvaerno1999low}
Kv{\ae}rn{\o}, A., Rentrop, P.: Low order multirate {Runge--Kutta} methods in
  electric circuit simulation (1999).
\newblock \doi{10.1.1.9.3084}

\bibitem{norsett1974semi}
Norsett, S.P.: Semi explicit {Runge--Kutta} methods.
\newblock Matematisk Institut, Universitet I (1974)

\bibitem{peaceman1955numerical}
Peaceman, D.W., Rachford Jr., H.H.: The numerical solution of parabolic and
  elliptic differential equations.
\newblock Journal of the Society for Industrial and Applied Mathematics
  \textbf{3}(1), 28--41 (1955).
\newblock \doi{10.1137/0103003}

\bibitem{ralston1962runge}
Ralston, A.: {R}unge--{K}utta methods with minimum error bounds.
\newblock Mathematics of Computation \textbf{16}(80), 431--437 (1962).
\newblock \doi{10.2307/2003133}

\bibitem{rice1960split}
Rice, J.R.: Split {Runge--Kutta} methods for simultaneous equations.
\newblock Journal of Research of the National Institute of Standards and
  Technology \textbf{60} (1960)

\bibitem{Sandu_2019_ODE-tests}
Roberts, S., Popov, A.A., Sandu, A.: {ODE} test problems: a {MATLAB} suite of
  initial value problems.
\newblock arXiv preprint arXiv:1901.04098  (2019).
\newblock
  \urlprefix\url{https://github.com/ComputationalScienceLaboratory/ODE-Test-Problems}

\bibitem{roberts2018coupled}
Roberts, S., Sarshar, A., Sandu, A.: Coupled multirate infinitesimal {GARK}
  schemes for stiff systems with multiple time scales.
\newblock SIAM Journal on Scientific Computing \textbf{42}(3), A1609--A1638
  (2020).
\newblock \doi{10.1137/19M1266952}

\bibitem{rodriguez2004computing}
Rodr{\'\i}guez-G{\'o}mez, G., Gonz{\'a}lez-Casanova, P.,
  Mart{\'\i}nez-Carballido, J.: Computing general companion matrices and
  stability regions of multirate methods.
\newblock International journal for numerical methods in engineering
  \textbf{61}(2), 255--273 (2004)

\bibitem{sand1992stability}
Sand, J., Skelboe, S.: Stability of backward {Euler} multirate methods and
  convergence of waveform relaxation.
\newblock BIT Numerical Mathematics \textbf{32}(2), 350--366 (1992)

\bibitem{sandu2018class}
Sandu, A.: A class of multirate infinitesimal {GARK} methods.
\newblock SIAM Journal on Numerical Analysis \textbf{57}(5), 2300--2327 (2019).
\newblock \doi{10.1137/18M1205492}

\bibitem{Sandu_2015_GARK}
Sandu, A., G\"{u}nther, M.: A generalized-structure approach to additive
  {Runge--Kutta} methods.
\newblock SIAM Journal on Numerical Analysis \textbf{53}(1), 17--42 (2015).
\newblock \doi{10.1137/130943224}

\bibitem{sarshar2019design}
Sarshar, A., Roberts, S., Sandu, A.: Design of high-order decoupled multirate
  {GARK} schemes.
\newblock SIAM Journal on Scientific Computing \textbf{41}(2), A816--A847
  (2019).
\newblock \doi{10.1137/18M1182875}

\bibitem{savcenco2008comparison}
Savcenco, V.: Comparison of the asymptotic stability properties for two
  multirate strategies.
\newblock Journal of Computational and Applied Mathematics \textbf{220}(1-2),
  508--524 (2008)

\bibitem{Savcenco2009}
Savcenco, V.: {Construction of a multirate RODAS method for stiff ODEs}.
\newblock Journal of Computational and Applied Mathematics \textbf{225}(2),
  323--337 (2009).
\newblock \doi{10.1016/j.cam.2008.07.041}

\bibitem{savcenco2007multirate}
Savcenco, V., Hundsdorfer, W., Verwer, J.: A multirate time stepping strategy
  for stiff ordinary differential equations.
\newblock BIT Numerical Mathematics \textbf{47}(1), 137--155 (2007)

\bibitem{scheidemann2005introduction}
Scheidemann, V.: Introduction to complex analysis in several variables.
\newblock Springer (2005)

\bibitem{Schlegel_2009_RFSMR}
Schlegel, M., Knoth, O., Arnold, M., Wolke, R.: Multirate {R}unge-{K}utta
  schemes for advection equations.
\newblock Journal of Computational and Applied Mathematics \textbf{226},
  345--357 (2009)

\bibitem{sexton2018relaxed}
Sexton, J.M., Reynolds, D.R.: Relaxed multirate infinitesimal step methods.
\newblock arXiv preprint arXiv:1808.03718  (2018).
\newblock Submitted to J. Comput. Appl. Math.

\bibitem{skelboe1989stability}
Skelboe, S., Andersen, P.U.: Stability properties of backward {Euler} multirate
  formulas.
\newblock SIAM journal on scientific and statistical computing \textbf{10}(5),
  1000--1009 (1989)

\bibitem{strang1968construction}
Strang, G.: On the construction and comparison of difference schemes.
\newblock SIAM journal on numerical analysis \textbf{5}(3), 506--517 (1968)

\bibitem{verhoeven2006error}
Verhoeven, A., Beelen, T., {El Guennouni}, A., {Maten, ter}, E., Mattheij, R.,
  Tasic, B.: Error analysis of BDF compound-fast multirate method for
  differential-algebraic equations, \emph{CASA-report}, vol. 0610.
\newblock Technische Universiteit Eindhoven (2006)

\bibitem{verhoeven2006general}
Verhoeven, A., El~Guennouni, A., Ter~Maten, E., Mattheij, R.: A general
  compound multirate method for circuit simulation problems.
\newblock In: Scientific Computing in Electrical Engineering, pp. 143--149.
  Springer (2006)

\bibitem{verhoeven2007stability}
Verhoeven, A., Ter~Maten, E.J.W., Mattheij, R.M., Tasi{\'c}, B.: Stability
  analysis of the {BDF} slowest-first multirate methods.
\newblock International Journal of Computer Mathematics \textbf{84}(6),
  895--923 (2007)

\bibitem{Wensch_2009_MIS}
Wensch, J., Knoth, O., Galant, A.: Multirate infinitesimal step methods for
  atmospheric flow simulation.
\newblock BIT Numerical Mathematics \textbf{49}(2), 449--473 (2009)

\bibitem{zanna2020discrete}
Zanna, A.: Discrete variational methods and symplectic generalized additive
  {R}unge--{K}utta methods.
\newblock arXiv preprint arXiv:2001.07185  (2020)

\bibitem{zhao2016asynchronous}
Zhao, D., Li, Y., Barbi\v{c}, J.: Asynchronous implicit backward {E}uler
  integration.
\newblock In: Proceedings of the ACM SIGGRAPH/Eurographics Symposium on
  Computer Animation, SCA '16, pp. 1--9. Eurographics Association, Goslar
  Germany, Germany (2016).
\newblock \doi{10.2312/sca.20161217}

\end{thebibliography}

\end{document}